\theoremstyle{definition}
\newtheorem{thm}{Theorem}[section]
\newtheorem{propo}{Proposition}[section]
\newtheorem{defn}{Definition}[section]
\newtheorem{corollary}{Corollary}[section]
\newtheorem{example}{Example}[section]
\renewcommand{\iff}{\leftrightarrow}
\renewcommand{\models}{\vDash}
\newcommand\LL{\mathcal{L}}
\newcommand\RR{\mathbb{R}}
\newcommand\K{\mathbb{K}}
\newcommand\Q{\mathbb{Q}}
\newcommand\PP{\mathbb{P}}
\newcommand\MM{\mathcal{M}}
\newcommand\NN{\mathcal{N}}
\numberwithin{equation}{subsection}
\title{The Model Theory of Falsification}
\author{Reid Dale \\ \href{mailto:reiddale@berkeley.edu}{reiddale@berkeley.edu}}
\date{September 2022}
\begin{document}

\maketitle

\begin{abstract}
    This paper is concerned with the question of when a theory is \textit{refutable} with certainty on the basis of sequence of primitive observations. Beginning with the simple definition of falsifiability as the ability to be refuted by \textit{some} finite collection of observations, I assess the literature on falsification and its descendants within the context of the dividing lines of contemporary model theory. The static case is broadly concerned with the question of how much of a theory can be subjected to falsifying experiments. In much of the literature, this question is tied up with whether the theory in question is axiomatizable by a collection of universal first-order sentences. I argue that this is too narrow a conception of falsification by demonstrating that a natural class of theories of distinct model-theoretic interest---so-called NIP theories---are themselves highly falsifiable. 
\end{abstract}

\section{Introduction}

Popper's \autocite{popper2005logic} solution to the demarcation problem says that the distinguishing feature of a scientific theory---construed as an empirical hypothesis---is its \textit{falsifiability}. Various accounts of falsification have emerged over the years; in this chapter, I aim to provide a model-theoretic account of falsification that will aid us in understanding both long-run and short-run properties of various falsificationist strategies.

Broadly speaking, falsification centers on the following question: given a class $\mathbb{K}$ of possible worlds, is there some finite collection of observations about our world $W$ that would allow us to infer $W\notin \mathbb{K}$? If the answer is ``yes,'' the class $\mathbb{K}$ is said to be falsifiable. 

Throughout, we suppose that $\mathcal{L}$ is a signature\footnote{Not necessarily finite or relational.} and $\mathbb{K} \subseteq \operatorname{Str}(\mathcal{L})$ a class of $\mathcal{L}$-structures. Epistemically, $\mathcal{L}$ plays the role of the collection of observable relations, functions, and constants that relate objects in the world. We suppose that the world $W$ is itself an $\mathcal{L}$-structure.

An \textit{observable formula} $\varphi(x_1,\dots,x_n)$ corresponds to a finite Boolean combination of atomic $\LL$-formulas. We say that $\varphi(x_1,\dots,x_n)$ is $\K$-forbidden just in case no $W\in \K$ realizes $\varphi$. Model theoretically, this is the same as saying
\[\K \models \neg (\exists x_1,\dots x_n)\varphi(x_1,\dots,x_n).\]
Of course, this is equivalent to 
\[\K \models (\forall x_1,\dots x_n)\neg\varphi(x_1,\dots,x_n),\]
motivating the following definition:
\begin{defn}
Let $\K$ be a class of $\mathcal{L}$-structures. We denote the universal theory of $\K$ as \[\forall_1(\K) = \{ \varphi\,|\, \varphi \text{ is a } \forall_1 \text{ first-order $\LL$-sentence and }\K \models \varphi \}.\]
We say that $\K$ is \textit{falsifiable} provided that the class of models of $\forall_1(\K)$ is nontrivial, i.e.,
\[\operatorname{Mod}(\forall_1(\K)) \neq Str(\LL). \]

Let $\K \subseteq \K'$ be two classes of $\LL$ structures. We say that $\K$ is falsifiable relative to $\K'$ provided the inclusion
\[\forall_1(\K) \supset \forall_1(\K')  \]
is proper. 
\end{defn}

One may think of $\operatorname{Str}(\mathcal{L})$ as the class of possible worlds relative to a signature $\LL$; indeed, it is the largest such collection of possible worlds. However, if one wants to study falsifiability relative to a class of $\LL$-structures containing analytic truths beyond the logical validities, one may turn to the relative definition of falsifiability. In the vast majority of this chapter, we will concern ourselves with falsification simpliciter.

An immediate corollary of this definition is that if $\K$ is falsifiable, then any class \textit{stronger} than $\K$ is falsifiable:

\begin{propo}\label{prop:downward_fals}
Let $\K$ be a falsifiable class. If $\K'\subseteq \K$, then $\K'$ is falsifiable.
\end{propo}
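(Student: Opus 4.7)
The plan is to unwind the definition of falsifiability and use the order-reversing relationship between a class and its universal theory. The core observation is that the operator $\K \mapsto \forall_1(\K)$ is antitone: whenever $\K' \subseteq \K$, every $\forall_1$-sentence true throughout $\K$ is automatically true throughout $\K'$, so $\forall_1(\K) \subseteq \forall_1(\K')$.

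From there I would apply the $\operatorname{Mod}$ operator, which is itself antitone on theories, to get the inclusion $\operatorname{Mod}(\forall_1(\K')) \subseteq \operatorname{Mod}(\forall_1(\K))$. Since $\K$ is falsifiable, we have $\operatorname{Mod}(\forall_1(\K)) \subsetneq \operatorname{Str}(\LL)$, and therefore a fortiori $\operatorname{Mod}(\forall_1(\K')) \subsetneq \operatorname{Str}(\LL)$, which is exactly the definition of $\K'$ being falsifiable.

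If one prefers the witness-based formulation, the same argument can be phrased as: fix an $\forall_1$-sentence $\varphi$ and a structure $W \in \operatorname{Str}(\LL)$ witnessing the falsifiability of $\K$, i.e., $\K \models \varphi$ and $W \not\models \varphi$. Since $\K' \subseteq \K$, we still have $\K' \models \varphi$, and the same $W$ witnesses that $W \notin \operatorname{Mod}(\forall_1(\K'))$.

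There is essentially no obstacle here: the proposition is a direct consequence of the antitonicity of the universal-theory operator together with the definition. The only thing that requires any care is simply ensuring that $\K' \subseteq \K$ is used in the correct direction—namely, that passing to a smaller class can only enlarge the set of sentences it satisfies, never shrink it.
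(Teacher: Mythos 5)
Your proposal is correct and follows essentially the same route as the paper: both rest on the antitonicity of $\K \mapsto \forall_1(\K)$, so that $\K' \subseteq \K$ gives $\forall_1(\K) \subseteq \forall_1(\K')$ and the nontrivial universal sentence witnessing the falsifiability of $\K$ carries over to $\K'$. The extra step you take through $\operatorname{Mod}$ is just a restatement of the definition and does not change the argument.
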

\begin{proof}
If $\K'\subseteq\K$ then $\forall_1(\K)\subseteq \forall_1(\K')$. Since $\forall_1(\K)$ contains nontrivial universal sentences, so does $\forall_1(\K')$, so $\K'$ is falsifiable.
\end{proof}

In particular, per this definition a class of structures $\K$ need not even be first-order axiomatizable in order to be falsifiable. Per this definition, all that is required of a class of structures to be falsifiable is that there is \textit{some} observable formula $\varphi$ whose realization is incompatible with the class $\K$. 

While this base notion of falsifiability is rather simple in its expression, refinements of the notion of falsification contain a great deal of mathematical complexity. In the remainder of this chapter, we will investigate refinements of falsifiability along two key axes:
\begin{enumerate}
    \item The \textbf{Static Case}, wherein we ask ``just how falsifiable is the class $\K$?'' relative to the above definition of falsifiability, and
    \item The \textbf{Dynamic Case}, wherein we ask ``how much observation is required to falsify a hypothesis, and how quickly can we expect to falsify it?''
\end{enumerate}

The present paper focuses on the \textit{static} case of falsification; a subsequent paper will address the \textit{dynamic} case.

In discussing the static case, we will primarily discuss variants surrounding the \textit{first-order, universal axiomatization} of a class. After all, if a class $\K$ is specified by a universal set of axioms, this means that each one of the theory's axioms expresses the class $\K$ being incompatible with some observation. Others, such as Simon and Groen \autocite[]{simon1979fit} and Chambers et al.~\autocite{chambers2014axiomatic}, propose even more stringent constraints on $\K$ than its universal axiomatizability to deem them falsifiable.

\section{Falsifiability and Unfalsifiability in Mechanics and Economics}

As a warm-up to our investigation of falsification, we investigate falsificational phenomena in physics and economics by way of an analysis of Newtonian Mechanics and the theory of choice.

\subsubsection{Newtonian Mechanics}

We begin by showing that, in a strong sense, the \textit{framework} of Newtonian Mechanics is unfalsifiable relative to the class of kinematic motions. 

For the definition of Newtonian system I follow the formalism given by Arnold~\autocite{arnol2013mathematical}.

\begin{defn} \autocite[p.~8]{arnol2013mathematical}
An $n$-particle \textit{motion} is a smooth function $x:\RR \to \RR^{3n}$ such that the graphs of the trajectories of each particle are non-intersecting.

A Newtonian system of $n$ particles is a motion $x:\RR \to \RR^{3n}$ such that there exists a vector field \[F: \RR^{3n}\times \RR^{3n} \times \RR \to \RR^{3n}\] such that
\[x''(t) = F(x(t),x'(t),t) \]
for all $t\in \RR$. 
\end{defn}

Let $\K$ be the class of Newtonian systems. Note here that since we do not require $\K$ to be an elementary class in order to be falsifiable, we do not have to exhibit a first-order axiomatization of $\K$.

By an $n$-particle \textit{kinematic datum} $e$ I mean an equality
\[ (x,x',x'')(t_0) = v\]
or inequality
\[ (x,x',x'')(t_0) \neq v \]
where $v \in \RR^{9n}$ and $t_0 \in \RR$. 
Intuitively, a kinematic datum is a specification of the numerical values of the vector $(x(t),x'(t),x''(t),t)\in \RR^{9n}\times \RR$.
For a set $E$ of kinematic data, let $\pi_t(E)$ be the set of times occurring as values in $E$. For an element $e\in E$, let $t_e$ be the value of the time coordinate of $e$.

We say that a set $E$ of kinematic data is \textit{motional} provided all sentences in $E$ are satisfied by some motion. 

\begin{propo}
Let $E$ be a finite motional set of kinematic data. Then there is an $n$-particle Newtonian system $x$ such that for all $e\in E$, $x$ satisfies all the conditions set out by $E$.

Thus, the class of Newtonian systems of $n$ particles is unfalsifiable relative to the class of $n$-particle motions.
\end{propo}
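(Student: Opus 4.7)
The plan is to exploit the fact that the definition of a Newtonian system given above permits the force field $F$ to depend explicitly on time, which makes every smooth motion automatically Newtonian—so the proof essentially reduces to reading off the motional witness.

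First, I would invoke motionality to obtain a motion $y : \RR \to \RR^{3n}$ which is smooth, whose particle trajectory graphs are disjoint, and which satisfies every datum in $E$. The second step is to exhibit $y$ itself as a Newtonian system by defining the vector field
\[ F : \RR^{3n} \times \RR^{3n} \times \RR \to \RR^{3n}, \qquad F(p, v, t) := y''(t), \]
i.e., letting $F$ ignore its position and velocity inputs and simply return the motional acceleration. Since $y$ is smooth so is $y''$, hence $F$ is smooth, and the identity $y''(t) = F(y(t), y'(t), t)$ holds tautologically for all $t \in \RR$. Thus $y$ is a Newtonian system satisfying $E$, which gives the first clause.

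For the second clause, I would argue contrapositively: suppose the class $\K$ of Newtonian systems were falsifiable relative to the class $\MM$ of motions. Then there would be a $\forall_1$ observable sentence $\varphi \in \forall_1(\K) \setminus \forall_1(\MM)$; pick a motion $y \in \MM$ witnessing $\neg\varphi$, and extract from this witness a finite motional set $E$ of kinematic data whose joint satisfaction contradicts $\varphi$. By the first clause, some Newtonian system satisfies $E$, contradicting $\varphi \in \forall_1(\K)$.

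The ``hard part'' is really only conceptual: one might worry about having to construct a $C^\infty$ motion interpolating finitely many prescribed values of $(x, x', x'')$, processing the inequality data, and avoiding trajectory collisions—genuine obstacles that would appear if the vector field $F$ were required to be autonomous or to arise from a classical potential. In Arnold's formulation adopted here, however, allowing $F$ to absorb arbitrary smooth time-dependence collapses Newton's second law into a vacuous constraint on any individual trajectory, and no interpolation argument is needed.
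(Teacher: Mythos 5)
Your proof is correct relative to the definitions as the paper literally states them, but it takes a much more elementary route than the paper's own argument, and it is worth being explicit about why the two diverge. You read ``motional'' as supplying a motion $y$ satisfying \emph{all} the data in $E$ --- positions, velocities, and accelerations alike --- which is indeed what the definition says, and you then observe that the constant-in-$(p,v)$ field $F(p,v,t) = y''(t)$ on configuration space $\RR^{3n}\times\RR^{3n}\times\RR$ witnesses $y$ as Newtonian tautologically. Nothing in Arnold's definition as quoted forbids this, so your two-line argument closes the first clause, and your contrapositive derivation of relative unfalsifiability from it is fine (modulo the translation of kinematic data into observable formulas, which the paper also leaves informal). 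The paper, by contrast, uses motionality only to match the \emph{position} data, then laboriously repairs the velocity and acceleration data with local ODE solutions glued in by bump functions, and finally builds $F$ so that it assigns to each particle its own acceleration by separating the trajectories into disjoint tubular neighborhoods $U_i(r)$. That extra work buys nothing under the stated configuration-space definition, but it is exactly what is needed if one strengthens the definition in the physically natural way --- requiring a \emph{single} force law $F$ on individual-particle state space $\RR^3\times\RR^3\times\RR$ that each particle obeys, so that $F$ must discriminate particles by where they are --- or if one weakens ``motional'' to guarantee only the position constraints. So your proof establishes the proposition as written and correctly diagnoses the source of the triviality (time-dependent $F$ voids Newton's second law of content for a single trajectory), while the paper's proof is best read as establishing the more robust claim that survives these stricter readings; if you wanted your argument to carry that additional weight, you would need to reinstate something like the tubular-neighborhood construction.
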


\begin{proof}
First, we note that if $E$ is motional we may replace all inequalities of $E$ with equalities to prove the claim.\footnote{Some care must be taken to ensure that the set $E$ remains motional in this case. So long as we replace $x_i(t) \neq p$ with some $x_i(t) = p'$ where $p'$ does not appear in the remaining conditions in $E$, the set $E'$ will remain motional. Since $E$ is finite and $\RR$ is infinite, it is always possible to replace finitely many inequalities with finitely many equalities and remain motional.} Let $Y_i$ denote the trajectory of the $i^{\text{th}}$ particle.

Thus, $e$ is equivalent to a system of equations of the form
\[(Y_i, Y'_i, Y''_i)(t_j) = v_j = (p_{ij}(t_j),v_{ij}(t_j),a_{ij}(t_j))\]
where $p_{ij}$, $v_{ij}$, and $a_{ij}$ represent the position, velocity, and trajectory of the $i^{\text{th}}$ particle at time $t_j$.

Since the data $E$ is motional, there exist $n$ smooth functions $Y_i:\RR \to \RR^3$ such that the positions of the $i^{\text{th}}$ particle $Y_i$ satisfy
\[Y_{i}(t_j) = p_{ij}.\]

We now show that we may alter this trajectory to ensure that $Y'_i(t_j) = v_{ij}$ and $Y''_i(t_j) = a_{ij}$ for each $i,j$. The following argument ensures that we can locally alter the $Y_i$'s without intersecting the graphs of the $Y_i$.

Let $I$ be a closed interval of finite length containing the open interval \newline $[\min((t_j)), \max((t_j))]$. Since the $Y_i$ are all continuous, there exists a compact box\footnote{By a box I mean a product of intervals.} $R \subseteq \RR^4$ such that  a neighborhood of each graph $\Gamma_i$ of each $Y_i$ restricted to $I$ is contained in $R$.

Since $R$ is a compact metrizable space and the graphs $\Gamma_i$ are closed and disjoint, there exists an $r\in \RR$ such that the \textit{tubular neighborhoods} of the graphs $\Gamma_i$ given by
\[ U_{i}(r) = \{x\in \RR^4 \,|\, d(x, \Gamma_i) < r \} \]
are disjoint. 

Now, by the existence and uniqueness of ODEs there locally exists a unique solution to the initial value problem
\[(x_i, x'_i, x''_i)(t_j) = (p_{ij},v_{ij},a_{ij}).\]
For each particle $i$, by taking a small enough interval $I_{i,j}$ around $t_j$ we let $Z_{i,j} :I_{i,j} \to \RR^3$ be the solution to this ODE and have the graph of $Z_{i,j}$ contained in the neighborhood $U_{i}(r)$ constructed above. Perhaps by shrinking the interval on which $Z_{i,j}$ solves the ODE, we may smoothly extend $Z_{i,j}$ to all of $\RR$ in a manner such that $Z_{i,j}(t) = 0$ for all $t \notin I_{i,j}$.

Now, by the existence of smooth bump functions, there exists a smooth bump function
\[b_{i,j}(t): \RR \to [0,1]\]
such that $b_{i,j}(t) = 0$ on some open interval containing $t_j$ and $b_{i,j}(t) = 1$ for all $t> max(I_{i,j})$ and $t<min(I_{i,j})$. We define
\[\tilde{Y}_i(t) = \left(\sum\limits_j b_{i,j}(t)\right)Y_i(t) + \left(1-\sum\limits_{j}b_{i,j}(t)\right) \sum\limits_{j}Z_{i,j}(t)) .\]
Then $\tilde{Y}_i(t)$ satisfies the required differential equations.

Finally, we must argue that there exists a force function $F:\RR^{3n}\times\RR^{3n}\times \RR \to \RR^{3n}$ such that
\[Y_i''(t) = F(Y_i(t),Y'_i(t),t)\]
for all $i$. Intuitively, we would like to define
\[F(x,x',t) = \sum\limits_{i} Y_i''(t) \text{ if } x(t) = Y_i(t) \]
but this is not a continuous function. However, by the construction of $U_r$, shrinking $r$ as necessary, we can ensure that this map is continuous by defining
\[F(x,x',t) = \sum\limits_{i} Y_i''(t) \text{ if } (x(t),t) \in U_i(r) .\qedhere\]





\end{proof}

This theorem indicates that no matter how many finite points of data we collect regarding the \textit{kinematics} of the system, there will always be some Newtonian theory which accommodates that data. This is the sense in which the framework of Newtonian mechanics fails to be falsifiable.

However, there are natural strengthenings of the class $\K$ of Newtonian motions which are falsifiable.

For example, suppose that our hypothesis is that a particle $P$ is a free particle with motion $x$ with zero initial acceleration relative to a fixed observer's frame of reference. This implies that for all $t$ the resultant force $F(x,x',t)$ is identically zero. Thus, the motion $x$ must follow a straight line.

This hypothesis is highly falsifiable. Since a line in $\RR^3$ is determined by two points, the theory of the free particle entails that if $x(t_1)$ and $x(t_2)$ are the positions of the first two observations of the particle, all subsequent observations of the particle must be a member of the line $L(x(t_1),x(t_2))\in \RR^3$. Thus, for every $n>2$, each subsequent observation carries with it the chance of refuting the claim that the particle is free. This is an instance of the notions of \textit{always falsifiability} and \textit{VC finiteness}, which we will discuss in our treatment of the dynamic case of falsification in section 2.3.

\subsubsection{Theory of Choice}

We now turn our attention from the falsification of physical theories to the falsification of economic theories of choice. To keep things simple, we model \textit{preference} as a binary relation $\prec$ on a set of choices $C$, where $x\prec y$ is interpreted as ``$y$ is strictly preferred to $x$.'' The data $(C,\prec)$ is called a \textit{preference structure}.

A frequently assumed necessary condition for a preference to be considered \textit{rational} is that the preference relation is acyclic; namely, that there is no chain $x_1\prec x_2 \cdots x_n \prec x_1$ and $x_1\not\prec x_1$. Let $\K$ be the class of acyclic preference structures.

The class $\K$ is falsifiable: if one observes a configuration
\[ \left( \bigwedge\limits_{1\leq i < n} c_i \prec c_{i+1} \right) \wedge c_n \prec c_1  \]
then one can conclude that the underlying choice structure $(C,\prec) \notin \K $. In fact, $\K$ is universally axiomatizable, axiomatized by the collection 
\[ A_n = (\forall x_1,\dots, x_n) \neg \left(\left( \bigwedge\limits_{1\leq i < n} x_i \prec x_{i+1} \right) \wedge x_n \prec x_1\right) \]
for all $n\in \omega$.

On the other hand, there are common rationality assumptions which are \textit{not} falsifiable.


For example, consider the axioms invoked to prove the representability of a preference structure $(C,\prec)$ by a utility function $u:X\to \RR$, that is,
\[x \preceq y \iff u(x) \leq u(y). \]
Gilboa \autocite[p.~51]{gilboa2009theory} gives the axioms
\begin{enumerate}
    \item \textbf{Completeness}: $(\forall x, y)(x\preceq y \vee y\preceq x)$,
    \item \textbf{Transitivity}: $(\forall x,y,z) \left([x\prec y \wedge y\prec z] \rightarrow x\prec z \right)$, and
    \item \textbf{Separability}:$(\exists Z\subseteq X)(\forall x,y)(\exists z\in Z)((|Z|\leq \aleph_0) \wedge (x\prec y \rightarrow (x\preceq z \preceq y)) $.
\end{enumerate}

Completeness and Transitivity are both $\forall_1$ sentences in the language $\LL = \{ \prec\}$, but Separability is naturally expressed as a second-order sentence. 

Gilboa makes a very interesting argument for the admissibility of the Separability axiom: its unfalsifiability ``suggests that [Separability] has no empirical content and therefore does not restrict our theory... Rather, the axiom is a price we have to pay if we want to use a certain mathematical model'' \autocite[p.~52]{gilboa2009theory}. 

The following theorem makes this argument precise.

\begin{thm}
Let $\K = \operatorname{Mod}(T)$ be the class of models of an $\LL = \{\prec\}$-theory $T$. Let $\K_{sep}$ be the class $\MM \in \K$ satsifying Separability. Then $\K_{sep}$ is unfalsifiable relative to $\K$.
\end{thm}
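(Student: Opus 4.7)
I want to show both inclusions of $\forall_1(\K_{sep}) = \forall_1(\K)$; one direction is immediate from Proposition \ref{prop:downward_fals}: since $\K_{sep} \subseteq \K$, we have $\forall_1(\K) \subseteq \forall_1(\K_{sep})$. The content of the theorem is the reverse inclusion, so it suffices to show that whenever a $\forall_1$ $\LL$-sentence $\varphi = (\forall x_1 \dots x_n)\psi(x_1,\dots,x_n)$ (with $\psi$ quantifier-free) fails on some $\MM \in \K$, it also fails on some $\MM' \in \K_{sep}$.

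The strategy is to use the downward Löwenheim--Skolem theorem to reflect the failure of $\varphi$ into a countable submodel, and then observe that Separability is automatic on countable structures. In detail: let $\bar a \in \MM$ witness $\MM \not\models \psi(\bar a)$, and take a countable elementary substructure $\MM' \preceq \MM$ containing $\bar a$ (possible since $\LL = \{\prec\}$ is countable). Since $\K$ is elementary and $\MM' \equiv \MM$, we have $\MM' \models T$, so $\MM' \in \K$. Since $\psi$ is quantifier-free it is preserved between $\MM'$ and $\MM$ at the tuple $\bar a$, so $\MM' \not\models \varphi$.

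It remains to verify that $\MM'$ satisfies Separability. This is the one-line heart of the argument: since $|\MM'| \leq \aleph_0$, we may take the witnessing set to be $Z = \MM'$ itself. Given any $x, y \in \MM'$ with $x \prec y$, choose $z := x \in Z$; then $x \preceq x$ by reflexivity of $\preceq$ and $x \preceq y$ follows directly from $x \prec y$, verifying the body of the Separability axiom.

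There is no real obstacle here, only a conceptual observation: Separability imposes no constraint whatsoever on countable $\LL$-structures, because the model is trivially its own countable ``dense'' subset. Downward Löwenheim--Skolem then guarantees that any $\forall_1$-failure in $\K$ can be transported to a countable---and hence automatically separable---model in $\K$, making the Separability axiom universally vacuous as a falsifier. This is precisely the technical content behind Gilboa's informal remark that Separability ``has no empirical content.''
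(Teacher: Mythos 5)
Your proof is correct and follows essentially the same route as the paper's: reflect the failure of a non-valid $\forall_1$ sentence into a countable elementary substructure via downward L\"owenheim--Skolem, and observe that countable structures satisfy Separability automatically (taking $Z$ to be the whole domain). The only difference is that you spell out the verification of the Separability axiom on the countable model, which the paper leaves implicit.
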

\begin{proof}
Let $T$ be a first-order theory and $\varphi$ a $\forall_1$ sentence such that $\varphi\notin \forall_1(\K)$. We wish to show that $\K_{sep}\not\models \varphi$.

Since $\varphi\notin \forall_1(\K)$ there exists some $\MM\in \K$ such that $\MM \models \neg \varphi$. Since $\varphi$ is a universal sentence, $\neg \varphi$ is existential. Let $\overline{m} \in \MM^k$ witness $\neg \varphi$. By the L{\"o}wenheim-Skolem theorem there exists a countable elementary substructure $\MM'\preceq \MM$ containing $\overline{m}$ of size $\aleph_0$. Since $\MM'\in \K$ and is of size $\aleph_0$, $\MM'\in \K_{sep}$. Therefore $\K_{sep}\not\models \varphi$, so
$\K_{sep}$ is unfalsifiable relative to $\K$.
\end{proof}

Thus, not only is the Separability axiom unfalsifiable relative to the class of \textit{all} $\LL$-structures, it is unfalsifiable relative to any first-order axiomatizable theory of preference. In this way the Separability axiom is empirically harmless: we may freely adjoin the Separability axiom to any first-order theory $T$ of choice structures without inadvertently strengthening the observable consequences of $T$.

\section{Falsifiability and the Randomness of the Universe}

As an application of the results of the previous section, we argue that for many ways of making precise the assertion that ``the world is, at a fundamental level, random,'' the assertion is unfalsifiable.

Defining what it means to be ``random,'' however, poses a great difficulty. To this end, I consider two different formalizations of randomness as it pertains to structures:
\begin{enumerate}
    \item The evolution of the universe is \textit{generic} in a suitable sense, and
    \item The evolution of the universe is generated by a stochastic process.
\end{enumerate}

For a suitable formulation of each of the above cases we will see unfalsifiability arise. To make sense of these two notions we define the notion of a time-indexed structure.

\begin{defn}
Let $\mathcal{L}$ be a relational language. Then the \textit{time-indexed language} $\mathcal{L}_{\tau}$ is given by
\[\mathcal{L}_{\tau} = \{R_i(x_1,\dots,x_m,t)\,|\, R_i(x_1,\dots,x_m) \in \LL \} \cup \{O(x),\tau(x), < \}.\]

A \textit{time-indexed structure} is an $\mathcal{L}_{\tau}$ structure satisfying the theory $T_{\tau}$ given by the axioms:
    \begin{enumerate}
        \item Objects and Times are different sorts, i.e.,
        \[(\forall x)(O(x)\vee \tau(x) \wedge \neg(O(x)\wedge \tau(x)),\]
        \item $(\forall \overline{x},t)(R(\overline{x},t)  \rightarrow \left(\bigwedge O(x_i) \wedge \tau(t) \right) )$, and
        \item the relation $<$ is a linear order on $\tau$. \qedhere
    \end{enumerate}
    
\end{defn}

A time-indexed $\mathcal{L}_{\tau}$-structure can be thought of simply as a time-indexed family of $\mathcal{L}$-structures. We use such examples all the time:

\begin{example}
Let $\mathcal{L} = \{H(x)\}$ be the language consisting of a single unary predicate. We can regard an $\omega$-sequence of coin flips as an $\mathcal{L}_{\tau}$ structure in a straightforward manner. The domain of the structure $\MM = \omega \cup \{c\}$, where $c$ is an object corresponding to the coin. We interpret $O(\MM) = \{c\}$, $\tau(\MM) = \omega$, and $\MM \models H(c,n)$ just in case the $n^{\text{th}}$ coin flip of $c$ returns heads.
\end{example}

\subsubsection{The Richness of the Universe}

We first consider the notion of the randomness of the universe as specified by the notion of a Fra{\"i}ss{\'e} limit. 

\begin{defn} \autocite[pp.~321-322]{hodges1993model} 
A countable class $\K$ of finite $\LL$-structures is a Fra{\"i}ss{\'e} class provided $\K$ satisfies the following properties:
\begin{enumerate}
    \item Hereditary Property (HP): If $\MM\in \K$ and $\NN\subseteq \MM$ is finitely generated then $\NN\in \K$,
    \item Joint Embedding Property (JEP): If $\MM,\NN\in \K$ then there exists $\mathcal{Q}\in\K$ such that $\MM$ and $\NN$ both embed in $\mathcal{Q}$, and
    \item Amalgamation Property (AP): If $\MM,\NN,\mathcal{Q}$ are $\LL$-structures and $f_{\NN}:\MM\to \NN$, $f_{\mathcal{Q}}:\MM\to\mathcal{Q}$ are embeddings there exists a structure $\mathcal{S}$ and embeddings $g_{\NN}: \NN \to \mathcal{S}$ and $g_{\mathcal{Q}}: \mathcal{Q} \to \mathcal{S}$ such that
    \begin{equation*}g_{\NN}\circ f_{\NN} = g_{\mathcal{Q}}\circ f_{\mathcal{Q}}.\qedhere
    \end{equation*}

\end{enumerate}

\end{defn}

\begin{example}
The class of finite linear orders is a Fra{\"i}ss{\'e} class.
\end{example}

\begin{example}
Let $\LL$ be a finite relational language. The class of finite $\LL$-structures forms a Fra{\"i}ss{\'e} class.
\end{example}

For a Fra{\"i}ss{\'e} class $\K$, there is a unique, highly homogeneous, countable structure $\K_{lim}$ into which all and only the members of $\K$ embeds, called the Fra{\"i}ss{\'e} limit.

\begin{defn}
Let $\MM$ be an $\LL$-structure. The \textit{age} of $\MM$, $\operatorname{age}(\MM)$, is the class of all finitely-generated $\LL$-structures embeddable in $\MM$. 
\end{defn}

\begin{thm} \autocite[Theorem 7.1.2]{hodges1993model}
Let $\K$ be a Fra{\"i}ss{\'e} class of $\LL$ structures. Then there is an $\LL$ structure $\K_{lim}$, unique up to isomorphism, such that
\begin{enumerate}
    \item $\operatorname{age}(\K_{lim}) = \K$,
    \item $|\K_{lim}|\leq \aleph_0$, and
    \item every isomorphism between finitely generated substructures $\MM_1,\MM_2\subseteq \K_{lim}$ extends to an automorphism of $\K_{lim}$. \qedhere
\end{enumerate}
\end{thm}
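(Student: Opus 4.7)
The plan is to isolate a single smoothness condition---the \emph{extension property}---that characterizes $\K_{lim}$ up to isomorphism, build a countable structure satisfying it by a chain construction, and deduce ultrahomogeneity and uniqueness by back-and-forth. Say that a countable $\LL$-structure $\MM$ has the extension property with respect to $\K$ if for every finitely generated $A \subseteq \MM$ and every embedding $f : A \to B$ with $B \in \K$, there is an embedding $g : B \to \MM$ such that $g \circ f$ equals the inclusion $A \hookrightarrow \MM$. The bulk of the proof is to build one countable structure $\K_{lim}$ with age $\K$ and the extension property; ultrahomogeneity and uniqueness are then formal.

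For existence I would construct $\K_{lim}$ as the union $\bigcup_n \MM_n$ of an ascending chain of members of $\K$, interleaving two bookkeeping tasks. The first task ensures $\operatorname{age}(\K_{lim}) = \K$: at designated stages apply JEP to embed $\MM_n$ and the next enumerated element of $\K$ jointly into some $\MM_{n+1} \in \K$. The second task forces the extension property: fix an enumeration of all pairs $(A, f : A \to B)$ where $A$ is a finitely generated $\LL$-structure and $f$ embeds $A$ into some $B \in \K$, and at the stage devoted to such a pair---provided $A$ has already been realized in $\MM_n$ via some embedding $\iota$---apply AP to $\iota : A \to \MM_n$ and $f : A \to B$ to obtain $\MM_{n+1} \in \K$ extending $\MM_n$ into which $B$ embeds over $\iota(A)$. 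The resulting $\K_{lim}$ is countable (a countable union of finite structures), has age $\K$ by HP downward and the JEP stages upward, and satisfies the extension property by construction.

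Ultrahomogeneity then follows from a back-and-forth: given an isomorphism $h_0 : \MM_1 \to \MM_2$ between finitely generated substructures of $\K_{lim}$ and an enumeration $a_0, a_1, \dots$ of the elements of $\K_{lim}$, iteratively extend $h_0$ to a finite partial isomorphism by applying the extension property to the current domain and the finitely generated substructure obtained by adjoining $a_i$, alternating with the symmetric ``back'' step to secure surjectivity. The same back-and-forth between $\K_{lim}$ and any other $\NN$ satisfying (1)--(3) yields uniqueness, once one observes that such an $\NN$ automatically inherits the extension property: given $A \subseteq \NN$ finitely generated and $f : A \to B$ with $B \in \K = \operatorname{age}(\NN)$, embed $B$ arbitrarily into $\NN$ and use ultrahomogeneity of $\NN$ to correct that embedding so that it extends $f$. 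The main obstacle is the bookkeeping: the relevant pairs $(A, f : A \to B)$ range over finitely generated substructures of the \emph{eventual} limit, so one needs a diagonal enumeration scheme that revisits every pair at some stage after $A$ first appears in the chain, and one must check that the two interleaved tasks do not collide in a way that invalidates the properties of $\K$ needed at the next step.
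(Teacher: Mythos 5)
Your proposal is correct and is essentially the standard Fra\"{\i}ss\'e construction given in the cited source (Hodges, Theorem 7.1.2); the paper itself imports this theorem without proof. The chain construction interleaving JEP and AP stages, the isolation of the extension property, and the back-and-forth arguments for ultrahomogeneity and uniqueness (including the observation that any $\NN$ satisfying (1)--(3) inherits the extension property) all match the canonical argument, and you correctly flag the one delicate point, namely the dovetailed enumeration of pairs $(A, f)$ so that each is treated after $A$ appears in the chain.
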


Thus, a Fra{\"i}ss{\'e} limit is extremely rich, able to accommodate any finite number of observations. Moreover, when a Fra{\"i}ss{\'e} limit exists for a class $\K$, the first-order theory $\K_{lim}$ is $\forall_1$-conservative over $\K$.

\begin{propo}
Let $\K$ be a Fra{\"i}ss{\'e} class of $\LL$-structures where $\LL$ is a finite relational language. If $\varphi$ is a $\forall_1$ $\LL$-sentence, then 
\[\K_{lim} \models \varphi \iff \K \models \varphi.\qedhere\] 
\end{propo}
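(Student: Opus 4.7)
The plan is to prove both directions using the standard preservation properties of universal sentences together with the defining characterization of the Fraïssé limit, namely $\operatorname{age}(\K_{lim}) = \K$. Write $\varphi = \forall \bar{x}\, \psi(\bar{x})$ with $\psi$ quantifier-free.

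For the forward direction, assume $\K_{lim} \models \varphi$ and let $\MM \in \K$. Since $\K = \operatorname{age}(\K_{lim})$, $\MM$ is isomorphic to a finitely generated (hence, because $\LL$ is relational, finite) substructure of $\K_{lim}$. Universal sentences are preserved under substructures: if $\bar{a} \in \MM$ is arbitrary, then $\psi(\bar{a})$ holds in $\K_{lim}$ by hypothesis, and because $\psi$ is quantifier-free its truth is preserved under embeddings for relational $\LL$. Hence $\MM \models \varphi$, and so $\K \models \varphi$.

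For the backward direction, assume $\K \models \varphi$, and let $\bar{a} = (a_1,\dots,a_n) \in \K_{lim}^n$ be arbitrary. Let $\NN$ be the substructure of $\K_{lim}$ generated by $\{a_1,\dots,a_n\}$. Because $\LL$ is finite and relational, $\NN$ is finite, and by definition $\NN \in \operatorname{age}(\K_{lim}) = \K$. Thus $\NN \models \varphi$, so in particular $\NN \models \psi(\bar{a})$. Since $\psi$ is quantifier-free and $\NN \subseteq \K_{lim}$, atomic truths about $\bar{a}$ agree in $\NN$ and $\K_{lim}$, so $\K_{lim} \models \psi(\bar{a})$. As $\bar{a}$ was arbitrary, $\K_{lim} \models \varphi$.

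There is no serious obstacle here; the proof is essentially a bookkeeping exercise about preservation of $\forall_1$-sentences under sub- and superstructures. The only subtle point worth flagging explicitly is the role of the hypothesis that $\LL$ is finite and relational: it guarantees that the substructure of $\K_{lim}$ generated by any finite tuple is finite (and thus lies in $\K$, which is a class of \emph{finite} structures). Without relationality, finitely generated substructures could be infinite, and the appeal to $\operatorname{age}(\K_{lim}) = \K$ would not immediately apply.
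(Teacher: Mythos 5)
Your proof is correct and follows essentially the same route as the paper's: the forward direction uses preservation of universal sentences under substructures via $\K = \operatorname{age}(\K_{lim})$, and the backward direction (which the paper phrases contrapositively, exhibiting a witness to $\neg\varphi$ inside a finitely generated substructure) uses that the substructure generated by any finite tuple of $\K_{lim}$ is finite and hence lies in $\K$. One small remark on your closing paragraph: only the relationality of $\LL$ is needed to guarantee that finitely generated substructures are finite; the finiteness of $\LL$ plays no role in either your argument or the paper's.
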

\begin{proof}
If $\K_{lim}\models \varphi$, then since every $\MM\in\K$ embeds into $\K_{lim}$ and $\varphi$ is $\forall_1$, $\K\models \varphi$.

Conversely, suppose that $\K_{lim} \not\models \varphi$. Then $\neg\varphi$ is existential, so there is some witness $\overline{m}\subset \K_{lim}$ to the falsity of $\varphi$. Since $\K = \operatorname{age}(\K_{lim})$ and $\left\langle m \right \rangle$ is a finitely-generated substructure of $\K_{lim}$, $\NN = \left\langle m \right \rangle \in \K$ and $\NN\models \neg \varphi$. Thus $\K\not\models \varphi$.\end{proof}

Thus, no new universal sentences are entailed by the Fra{\"i}ss{\'e} limit of $\K$. 

We show that the class of finite time-indexed structures forms a Fra{\"i}ss{\'e} class, which we shall see yields unfalsifiabilty of the generic theory relative to the class of time-indexed structures.

\begin{thm}
Let $T_{\tau}$ be the theory of time-indexed structures. Then
\begin{enumerate}
    \item the class of finite models of $T_{\tau}$ is a Fra{\"i}ss{\'e} class, and
    \item the theory $T_{\tau,lim}$ of the Fra{\"i}ss{\'e} limit of the class is the model companion of the theory $T_{\tau}$.\qedhere
\end{enumerate}
\end{thm}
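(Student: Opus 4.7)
The plan is to tackle the two parts sequentially. For part (1), I would verify the three Fra\"iss\'e properties for $\K$, the class of finite models of $T_\tau$. Countability of isomorphism types is automatic from countability of the language. The Hereditary Property is immediate because the axioms of $T_\tau$ are all universal and hence preserved under substructure. For the Joint Embedding Property, given finite $\MM, \NN \in \K$ I would form a disjoint union on the object sort, concatenate the two linear orders on the time sort by placing all of $\tau(\MM)$ below all of $\tau(\NN)$, and inherit each relation from the structure on which its arguments live. For the Amalgamation Property over a common substructure, I would take the pushout on the object sort, amalgamate the time sorts by inserting the ``new'' elements of each extension into the appropriate gaps of the common suborder (breaking ties between the two extensions' new elements by an arbitrary convention), and declare each relation to hold precisely when all its arguments lie in one of the two factors and the relation holds there.

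For part (2), recall that $T^*$ is the model companion of $T$ provided (a) every model of $T$ embeds into a model of $T^*$, (b) every model of $T^*$ is a model of $T$, and (c) $T^*$ is model complete. Condition (b) is easy: since $T_\tau$ is universally axiomatized and $\K_{lim}$ is a direct limit of finite models of $T_\tau$, we have $\K_{lim} \models T_\tau$, so every model of $T_{\tau,lim}$ satisfies $T_\tau$. Condition (a) I would handle by applying compactness to $T_{\tau,lim} \cup \operatorname{Diag}(\MM)$: any finite fragment of the diagram mentions only finitely many elements, which generate a finite substructure in $\K = \operatorname{age}(\K_{lim})$ and therefore embed into $\K_{lim}$. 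For condition (c), I would prove that $T_{\tau,lim}$ admits elimination of quantifiers---which implies model completeness---via a back-and-forth argument powered by ultrahomogeneity: any two tuples from $\K_{lim}$ realizing the same quantifier-free type generate isomorphic finitely generated substructures, and homogeneity extends this isomorphism to an automorphism, so the tuples realize the same full type.

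The step I expect to be most delicate is the Amalgamation Property on the time sort in part (1): one must verify that the set-theoretic pushout of the time sorts can be endowed with a single linear order extending both factors while remaining consistent with the order on the common substructure. This is essentially the standard amalgamation of linear orders over a common suborder, but it requires careful tracking of how the new elements on each side interleave relative to the base, and of their interaction with the ambient sort-partition axiom. Once this bookkeeping is handled, the inheritance of relations across the amalgam is routine, and the model-companion conditions in part (2) follow cleanly from the standard Fra\"iss\'e-theoretic machinery with only mild attention to the two-sorted signature.
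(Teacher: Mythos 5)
Your proposal is correct and takes essentially the same route as the paper: both establish HP from the universal axiomatization and obtain JEP and AP by first amalgamating the time sort as a finite linear order and then the $\LL$-structures fibered over each time, and both then verify the three model-companion conditions. The only differences are interchangeable standard devices --- you embed an arbitrary model of $T_{\tau}$ into a model of $T_{\tau,lim}$ by compactness applied to $T_{\tau,lim}\cup\operatorname{Diag}(\MM)$ where the paper uses an ultraproduct of finite substructures, and you derive model completeness from quantifier elimination via ultrahomogeneity where the paper cites the corresponding theorem of Hodges.
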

\begin{proof}
We need to show that class of finite models of $T$ is a Fra{\"i}ss{\'e} class.

First, since the class $\operatorname{Mod}(T_{\tau})$ is universally axiomatizable, its finite models satisfy (HP).

By the axioms of $T_{\tau}$ each model $\mathcal{M} \in \operatorname{Mod}_{fin}(T_{\tau})$ can be expressed as
\[ \MM = ((W_i,t_i))_{i<m} \] 
where each $W_i$ is an $\LL$-structure (recall $\LL_{\tau}$ was obtained from $\LL$) and each $t_i$ is a time.

A necessary and sufficient condition for a map $f:\MM \to \NN$ with $\MM,\NN \in \operatorname{Mod}(T)$ is that $f$ restricted to $\tau$ is an order embedding and that for each time $t_i$, $f(W_i) \subset W_{f(t_i)}$ is an embedding of $\LL$-structures. From this decomposition of embeddings it is clear that the joint extension property and amalgamation property holds as the class of finite $\LL$-structures and the class of finite linear orders are both Fra{\"i}ss{\'e} classes: to jointly embed two finite models of $T_{\tau}$ structures, first jointly embed their temporal component and then jointly embed their $\LL$-structures at each time in the intersection of the embedding. Likewise, one may amalgamate by first amalgamating the temporal component and then amalgamating the $\LL$-structures over each time.

Thus the theory of the Fra{\"i}ss{\'e} limit $T_{\tau,lim}$ exists and model complete by \autocite[Theorem 7.4.2]{hodges1993model}. It remains to show that $T_{\tau,lim}$ is a model companion of $T_{\tau}$. 

Clearly every model of $T_{\tau,lim}$ is a model of $T_{\tau}$, so it suffices to show that every model of $T$ embeds into a model of $T_{\tau, lim}$. Suppose $\MM \models T_{\tau}$. Then since $T_{\tau}$ is $\forall_1$ axiomatizable, all finitely generated substructures of $\MM$ are models of $T_{\tau}$. Moreover, $\MM$ embeds into an ultraproduct of its finite substructures since the language is relational, and in turn each finite substructure embeds into the Fra{\"i}ss{\'e} limit of the class. Thus $\MM$ embeds into an ultrapower of the Fra{\"i}ss{\'e} limit of the class and hence, since $T_{lim}$ is elementary, a model of $T_{lim}$.
\end{proof}

As a corollary, we have the following.

\begin{corollary}
The theory $T_{\tau,lim}$ is $\forall_1$-conservative over $T$; thus, $T_{\tau,lim}$ is relatively unfalsifiable over $T_{\tau}$.
\end{corollary}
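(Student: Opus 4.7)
The plan is to derive the conservativity directly from the fact, established in the preceding theorem, that $T_{\tau,lim}$ is a model companion of $T_\tau$. Recall that being a model companion means that $T_{\tau,lim}$ is model complete, every model of $T_{\tau,lim}$ is a model of $T_\tau$, and every model of $T_\tau$ embeds into some model of $T_{\tau,lim}$.

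For the conservativity claim, the direction $\forall_1(T_\tau) \subseteq \forall_1(T_{\tau,lim})$ is automatic because $\operatorname{Mod}(T_{\tau,lim}) \subseteq \operatorname{Mod}(T_\tau)$. For the reverse direction, suppose $\varphi$ is a $\forall_1$ $\LL_\tau$-sentence with $T_{\tau,lim} \models \varphi$, and let $\MM$ be an arbitrary model of $T_\tau$. By the model companion property there is an embedding $\MM \hookrightarrow \NN$ for some $\NN \models T_{\tau,lim}$; then $\NN \models \varphi$, and since $\varphi$ is universal and thus preserved by passage to substructures, $\MM \models \varphi$. Since $\MM$ was arbitrary, $T_\tau \models \varphi$, so $\forall_1(T_\tau) = \forall_1(T_{\tau,lim})$.

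The relative unfalsifiability is then an immediate reformulation: by the definition of falsifiability relative to a superclass, $\operatorname{Mod}(T_{\tau,lim})$ is falsifiable relative to $\operatorname{Mod}(T_\tau)$ just in case the inclusion $\forall_1(T_{\tau,lim}) \supseteq \forall_1(T_\tau)$ is proper. The equality of these two sets established above means the inclusion is not proper, so $T_{\tau,lim}$ is unfalsifiable relative to $T_\tau$, as claimed.

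There is essentially no obstacle here beyond correctly invoking the model companion property, since the hard work---establishing amalgamation and joint embedding for time-indexed structures, and embedding models of $T_\tau$ into ultrapowers of the Fra\"iss\'e limit---has already been carried out in the preceding theorem. One could alternatively route the argument through the earlier proposition that a Fra\"iss\'e limit is $\forall_1$-equivalent to its age, checking finite substructures of $\MM$ directly, but the model companion route is strictly shorter given what has already been proved.
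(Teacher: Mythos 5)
Your proof is correct and follows exactly the route the paper intends: the corollary is stated as an immediate consequence of the preceding theorem, and your argument---upward inclusion from $\operatorname{Mod}(T_{\tau,lim})\subseteq\operatorname{Mod}(T_\tau)$, downward inclusion from embedding any model of $T_\tau$ into a model of $T_{\tau,lim}$ plus preservation of $\forall_1$ sentences under substructures, then unwinding the definition of relative falsifiability---is precisely the intended justification. No discrepancies to report.
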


Therefore, for this sense of genericity, ``the Universe is a generic time-indexed $\LL$-structure'' is not falsifiable relative to the theory of time-indexed structures.

\subsubsection{The Stochasticity of the Universe}

We now turn to \textit{probabilistically generated} models of the evolution of the universe.

\begin{defn}

The \textit{discrete time-index language} $\mathcal{L}^d_{\tau}$ is
\[ \mathcal{L}^d_{\tau} = \{R_i(x_1,\dots,x_m,t)\,|\, R_i(x_1,\dots,x_m) \in \LL \} \cup \{O(x),\tau(x), <, S(x) \}.\qedhere\]

\end{defn}

We work with a distinguished class of $\mathcal{L}^d_{\tau}$ structures $\mathcal{M}$, namely those such that 
\begin{enumerate}
    \item $(\tau(\mathcal{M}),<,S)$ is the structure of $(\omega,<,S)$,
    \item  $O(\mathcal{M})$ is $[n]$ for some $n\in \omega$,
    \item the world $(W,0)$ is drawn from a probability distribution $\mu$ on the state space $\Sigma = Str_{\LL}([n])$, and
    \item the world $(W,t+1)$ is obtained from $(W,t)$ by way of a time-homogeneous memoryless Markov process, i.e., there exists a stochastic matrix $\rho$ on the state space $\Sigma$ in $\omega$ such that $\PP((W,t+1)|(W',t)) = \rho(W,W')$.
\end{enumerate}

\begin{example}
This construction generalizes the construction of an $\omega$ sequence of IID coin flips. In this case, $O(\MM) = \{c\}$ is a single object, and each time $t$ is associated to an $\LL = \{H(x)\}$-structure where $H(x)$ is a unary predicate meaning ``$x$ flipped heads.'' The basic $\LL_{\tau}^d$ predicate $H(x,t)$ means ``$x$ flipped heads at time $t$.''

Let $\mu$ be any measure on $\{H,T\}$, i.e., an assignment of $p_H,p_T \in [0,1]$ such that $p_H + p_T = 1$. The stochastic transition matrix $\rho$ is given explicitly by
\[ \rho =  \begin{pmatrix}
p_H & p_T  \\
p_T & p_H\\
\end{pmatrix}.
\]
Such a stochastic process generates an $\LL_{\tau}^d$ structure on domain $\omega \cup \{c\}$.\end{example}



Now, let $\mathcal{C}$ be a set of pairs $(\mu, \rho)$ where $\mu$ is a probability distribution on $\Sigma$ and $\rho$ is a stochastic matrix on $\Sigma$.\footnote{Recall that a \textit{stochastic matrix} $\rho$ is a matrix such that the sum of the entries over each row and each column is $1$. A stochastic matrix $\rho$ is irreducible provided that for all $\sigma,\sigma'\in \Sigma$ there exists an $n\in \omega$ such that the $\rho^n(\sigma,\sigma') > 0$. In other words, each state is reachable from every other state after some finite number of steps with positive probability.} The choice of $\mu$ and $\rho$ induce a unique probability measure $\PP_{\mu,\rho}$ on $\Sigma^{\omega}$.
The existential $\mathcal{C}$-theory $T_\mathcal{C}$ in $\mathcal{L}^d_{\tau}$ is given by: 
\[ (\exists \overline{x},\overline{t}) \varphi(\overline{x},\overline{t}) \in T_{\mathcal{C}} \iff  (\forall \mu,\rho \in\mathcal{C}) \left( \PP_{\mu,\rho}(\varphi(\overline{x},\overline{t}) \text{ is eventually realized})= 1 \right).\]

Let $\mathcal{C}_{+}$ be the class of pairs $(\mu, \rho)$ of initial distributions $\mu$ on $\Sigma$ with $\mu(W) > 0$ for each $W\in \Sigma$ and stochastic matrices $\rho$ with rows and columns indexed by $\Sigma$ such that $\rho(W,W') > 0$ for all $W,W'\in \Sigma$. 

Let $\varphi_{\mathcal{M}}(\overline{x},t)$ be the sentence saying that at time $t$ the $\LL$-structure $\NN(t)$ is isomorphic to $\MM$. To show that every $\mathcal{L}_{\tau}^d$-satisfiable $\exists_1$ formula in the language $\mathcal{L}_{\tau}^d$ is a member of $T_{\mathcal{C}}$ it suffices to show that every formula of the form
\[ \left( \bigwedge\limits_{i} \varphi_{\MM_i}(\overline{x}_i,s_i) \right) \wedge \bigwedge\limits_{i} (s_i < s_{i+1}) \]
where 
\begin{enumerate}
    \item $W_i \in \Sigma$, 
    \item $s_i$ is a term in the language of the successor function $\{S(x)\}$, and
    \item the formula $\bigwedge\limits_{i} s_i < s_{i+1}$ is realizable in $\left\langle \omega, <, S(x)\right\rangle$
\end{enumerate}
is realized with probability one for each $\PP \in \mathcal{C}_{+}$.

We demonstrate this by studying an auxiliary Markov process on $\Sigma^m$, where $m$ is the number of terms $s_i$ occurring in the formula $\varphi$.

Let $(n_1,\dots,n_m)$ realize the formula $\bigwedge\limits_{i} s_i < s_{i+1}$. Note that for all $k\in\omega$, $(n_1+k,\dots,n_m+k)$ also realizes $\bigwedge\limits_{i} s_i < s_{i+1}$.

The stochastic process 
\[((W_1,n_1),\dots,(W_m,n_m)) \to ((W'_1,n_1+1),\dots,(W'_m,n_m+1))\] is inferred from the data $(\mu,\rho)$. The pair $(\mu,\rho)$ induce atime-homogeneous Markov Chain on $\Sigma^m$ as follows: for each $(W_1,\dots, W_m)$ assign initial probability \newline $\mu^{*}((W_1,\dots, W_m))$ according to the probability that $((W_1,n_1),\dots,(W_m,n_m))$ is realized given $(\mu,\rho)$ in the first $n_m$ transitions. Note that by assumptions on $\mu$ and $\rho$, 
\[\mu^*((W_1,\dots,W_m))>0 \]
for all $W_1,\dots, W_m$. Likewise, define a stochastic matrix $\rho^*$ on $\Sigma^m$ by setting 
\[\rho^*((W_1,\dots,W_m),(W'_1,\dots,W'_m)) = \prod\limits_{i} \rho(W_i,W'_i).\]
By assumption on $\rho$, $\rho^*(\sigma,\sigma') > 0$ for all $\sigma,\sigma' \in \Sigma^m$. Thus the data $(\mu^*,\rho^*)$ are themselves a time-homogeneous Markov process such that $\mu^*(\sigma) > 0$ and $\rho^*(\sigma,\sigma') > 0$ for all $\sigma\in \Sigma^m$.

In particular, $\rho^*$ is an \textit{irreducible} stochastic matrix and so by standard results in Markov theory \autocite[Theorem 6.4.4, 6.5.6]{durrett2010probability} there exists a unique stationary distribution $\eta_{\rho^*}$ on $\Sigma^m$ capturing the asympototic probability that state $\sigma\in \Sigma^m$ is observed on the $k^{\text{th}}$ trial, and since expected return times are finite for every irreducible Markov chain, $\eta_{\rho^*}(\sigma) > 0$. Thus, in the long run, with probability one relative to $(\mu,\rho)$ the sentence 
\[ \left( \bigwedge\limits_{i} \varphi_{\MM_i}(\overline{x}_i,s_i) \right) \wedge \bigwedge\limits_{i} (s_i < s_{i+1}) \]
is realized.


In other words, on such a model of the evolution of the universe, every consistent configuration of atomic sentences is realized with probability one according to this process. Hence, this theory is unfalsifiable.

The theories $T_{\mathcal{C}}$ naturally occur as a formal model of the universe as a theormodynamic fluctuation. The idea that the universe is merely a fluctuation has been discarded by many prominent physicists such as Feynman and Carroll; it is worth investigating how these arguments dovetail with the present discussion of their falsifiability.

Feynman argues that we can refute this hypothesis, writing:
\begin{quote} 
Thus one possible explanation of the high degree of order in the present-day world is that it is just a question of luck. Perhaps our universe happened to have had a fluctuation of some kind in the past, in which things got somewhat separated, and now they are running back together again...

[F]rom the hypothesis that the world is a fluctuation, all of the predictions are that if we look at a part of the world we have never seen before, we will find it mixed up, and not like the piece we just looked at. If our order were due to a fluctuation, we would not expect order anywhere but where we have just noticed it...
 is 
Every day they turn their telescopes to other stars, and the new stars are doing the same thing as the other stars. We therefore conclude that the universe is not a fluctuation. \autocite[Lecture 46-5]{feynman2011feynman}
\end{quote}

On this account, from the fact that we observe order---the aggregate of all of our observations of the universe---we can conclude that the universe is not a fluctuation.

At first glance this argument appears to be an argument from falsification:

\[\begin{nd}
\hypo {1}{\text{The Fluctuation Hypothesis entails that the universe is disordered.}}
\hypo {2}{\text{We observe order in the universe.}} 
\have {3}{\text{The Fluctuation Hypothesis is false.}} 
\end{nd}
\]

After all, it appears to be framed as a \textit{reductio ad absurdum}, but the inference is more subtle than that. If by the fluctuation hypothesis we understand it to mean that the universe is generated probabilistically in the manner described above, then observing order of arbitrarily large complexity is in fact a deductive consequence of the theory $T_{\mathcal{C}}$.

The tension here comes from a quirk of the probabilistic framework and its relation to first-order logic; while the probability of a \textit{specific} observer witnessing a given highly-ordered conjunction of atomic and negations of atomics formulas will be quite low,  nevertheless the theory \textit{predicts} that all such observations will be witnessed. In other words, two notions of \textit{prediction} are at play: in one sense, the theory \textit{entails} that with probability 1 the state that is observed will happen, all the while entailing that the observer in question witnesses a sequence of \textit{low probability states}. Carroll \autocite{carroll2020boltzmann} refers to this latter property of the fluctation theory as rendering observers ``cognitively unstable'' in the sense that the theory in question actively thwarts inductive reasoning as understood by Bayesian confirmation theory.

What Feynman has in mind, most likely, is an anthropic principle of the kind that says we should only affirm/consider theories $T$ which themselves make it highly probably that \textit{our own} inductive reasoning is highly conducive to truth.

Much ink has been spilled over anthropic principles in connection with the hypothesis that the universe is in some manner random \autocite{bostrom2013anthropic}, but the results of this section indicate that such theories suffer the defect of unfalsifiability. While being unfalsifiable does not refute the \textit{truth} of the hypothesis, it does show that the hypothesis is not amenable to being refuted by way of finitary modes of data acquisition.

\section{How Much of a Theory is Falsifiable?} 

The static models of falsifiability typically concern themselves with questions of how close a theory is to being universally axiomatizable; after all, the more universal sentences a theory implies, in principle the more falsfiable the theory becomes. 

Simon and Groen \autocite{simon1979fit}, in their work on Ramsification and the Second-Order definability of theories, isolate a notion on \textit{pseudoelementary} classes $\K$ they call FITness which they claim isolates the ideal scientific theories: On their account, a pseudoelementary class $\K$ is FIT if and only if it is a scientific theory. They show that for pseudoelementary $\K$, being FIT implies its universal axiomatizability.\footnote{The proof presented in their paper is incorrect. However, this error is fixed in this disseration.} Generalizing their definition to arbitrary classes of $\LL$-structures $\K$ closed under isomorphism, I show that for finite languages $\K$ being FIT entails that $\K$ is elementary and, in fact, universally axiomatizable. This result substantially generalizes their result over finite languages. 

I then turn my attention to an argument given by Chambers et al.~\autocite{chambers2014axiomatic} that argues that being universally axiomatizable is not sufficient grounds to call a theory falsifiable. Instead, they identify the falsifiable sentences with a class of universal sentences they call UNCAF (a \textul{u}niversal \textul{n}egation of a \textul{c}onjunction of \textul{a}tomic \textul{f}ormulas). In turn, I argue that their argument implicitly assumes that the underlying predicates $P\in \LL$ exhibit mere $\Sigma_1$ behavior  and thus that their argument reaches too far in its conclusions.

As a final foray into the static case of falsification, I consider how falsification intersects with the dividing lines of classification theory. It is not too difficult to show that under very mild restrictions on the language $\LL$, NIP theories entail a great deal of nontrivial $\forall_1$ sentences and are highly falsifiable. Of note, in NIP theories each formula $\varphi$ is equipped with a notion of dimension known as the VC-dimension of a class, which in a sense measures the effective falsifiability of membership in the class of hypotheses it defines. 

On the other hand, recent work of Kruckman and Ramsey \autocite{kruckman2018generic} and, independently, Je{\v{r}}{\'a}bek \autocite{jevrabek2020recursive}, yield examples of NSOP$_1$ and simple theories which are \textit{un}falsifiable. While there are many NSOP$_1$ theories which are falsifiable, in a sense NIP is individuated among the dividing lines in model theory as a class of highly-falsifiable theories.

\subsection{FITness: The Finite Signature Case}

We begin our investigation of the static case of falsification by exploring the notion of FITness---the \textit{finite} and \textit{irrevocable}---testability of a theory. Simon and Groen \autocite{simon1979fit} argue that, at least when $\K$ is a pseudoelementary class, $\K$ being FIT is necessary and sufficient for $\K$ to be a scientific theory. They purport to show that if  $\K$ is FIT and pseudoelementary, then $\K$ is $\forall_1$ axiomatizable. In this section I show that so long as the signature $\LL$ is finite, the requirement that $\K$ is pseudoelementary is unneccessary; all that is needed is that $\K$ is closed under $\LL$-isomorphism.

\begin{defn} \label{def:fitness}
Let $\LL$ be a language and $\mathbb{K}$ a class of $\LL$-structures. $\mathbb{K}$ is said to be \textit{FIT} provided that
\begin{enumerate}[label=\roman*]
\item $\mathbb{K}$ is \textit{finitely testable}, i.e., $\mathbb{K}$ is nontrivial:
\begin{equation*}
\mathbb{K} \neq \text{Str}(\LL)
\end{equation*}
and  for every $\MM\in \text{Str}(\LL)$,
\begin{equation*}
(\forall \NN\in\text{Str}(\LL) [(|\NN|<\aleph_0 \wedge \NN \subseteq_{\LL} \MM) \rightarrow \NN \in \mathbb{K}]) \rightarrow \MM \in \mathbb{K},
\end{equation*}
\item and $\mathbb{K}$ is \textit{irrevocably testable}, i.e., for every $\MM\in \text{Str}(\LL)$ 
\begin{equation*}
\MM \in \mathbb{K} \rightarrow (\forall \NN\in\text{Str}(\LL) [(|\NN|<\aleph_0 \wedge \NN \subseteq_{\LL} \MM) \rightarrow \NN \in \mathbb{K}]). \qedhere
\end{equation*}
\end{enumerate}
\end{defn}

In the case of a finite relational language $\mathcal{L}$, any FIT class $\mathbb{K}$ is universally axiomatizable. This substantially weakens the assumption on $\mathbb{K}$ given in the original paper of Simon and Groen at the cost of working within a more limited class of languages. 

\begin{thm}\label{thm:fitness_ax}
Let $\mathbb{K}$ be a FIT class of a structures over a finite relational language $\mathcal{L}$ closed under isomorphism. Then $\mathbb{K}$ is universally axiomatizable.
\end{thm}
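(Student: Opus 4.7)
The plan is to show $\K = \operatorname{Mod}(\forall_1(\K))$. The forward inclusion $\K \subseteq \operatorname{Mod}(\forall_1(\K))$ is immediate from the definition of $\forall_1(\K)$, so the work is to prove that any $\MM \models \forall_1(\K)$ must lie in $\K$. My strategy is to apply finite testability (clause (i) of Definition \ref{def:fitness}) to reduce the problem to showing that every finite substructure $\NN$ of $\MM$ belongs to $\K$, and then encode each such $\NN$ by a single existential sentence whose negation yields a contradiction when $\NN \notin \K$.

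For the encoding step, fix a finite substructure $\NN \subseteq \MM$ with universe $\{a_1,\dots,a_n\}$. Because $\LL$ is a \emph{finite} relational language, only finitely many atomic $\LL$-formulas use variables among $x_1,\dots,x_n$, so the full atomic diagram of $\NN$ can be packaged as a finite quantifier-free formula $\delta_\NN(\bar x)$ with one conjunct, either $\alpha$ or $\neg\alpha$, for each such atomic formula $\alpha$ (including the disequalities $x_i \neq x_j$ for $i\neq j$). Setting $\varphi_\NN := \exists x_1 \cdots \exists x_n\, \delta_\NN(\bar x)$ produces a first-order existential sentence whose models are precisely those $\LL$-structures containing an isomorphic copy of $\NN$ as a substructure.

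Now suppose for contradiction that some finite $\NN \subseteq \MM$ is not in $\K$. If some $\MM' \in \K$ contained an isomorphic copy $\NN' \subseteq \MM'$ of $\NN$, then irrevocable testability (ii) would force $\NN' \in \K$, and closure under isomorphism would then give $\NN \in \K$, a contradiction. Hence $\K \models \neg\varphi_\NN$, so the $\forall_1$-sentence $\neg\varphi_\NN$ lies in $\forall_1(\K)$. But $\MM \models \forall_1(\K)$ yields $\MM \models \neg\varphi_\NN$, while the tuple $a_1,\dots,a_n$ witnesses $\MM \models \varphi_\NN$, a contradiction. Thus every finite substructure of $\MM$ is in $\K$, and finite testability (i) delivers $\MM \in \K$.

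The main place where the hypotheses do real work is the passage from ``$\NN \notin \K$'' to ``$\neg\varphi_\NN \in \forall_1(\K)$'': this requires both that $\varphi_\NN$ be an honest first-order sentence (so $\LL$ must be \emph{finite}) and that the isomorphism type of $\NN$ be captured by the atomic diagram on a finite tuple with no closure conditions (which is why the \emph{relational} hypothesis is needed, since then substructures are just subsets equipped with the induced relations). Once this machinery is in place, neither pseudoelementarity of $\K$ nor any ultraproduct construction is required, which is where the improvement over Simon and Groen enters.
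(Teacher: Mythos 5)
Your proof is correct, and it reaches the conclusion by a route that is recognizably dual to the paper's rather than identical to it. The paper writes down an explicit universal axiom system $T_{\K}=\{\psi_n\}_{n\in\omega}$, where $\psi_n$ \emph{positively} asserts that every $n$-tuple of distinct elements realizes the diagram of one of the finitely many isomorphism types in $\K[n]$; this requires the auxiliary observation that a finite relational language admits only finitely many isomorphism classes of structures of size at most $n$, so that the disjunction $\bigvee_{[\MM]\in\K[n]}\varphi_{\MM}$ is an honest first-order formula. You instead show $\K=\operatorname{Mod}(\forall_1(\K))$ and only ever need to \emph{forbid} one bad configuration at a time: for each finite $\NN\notin\K$ you produce the single sentence $\neg\varphi_{\NN}$ (equivalent to the $\forall_1$ sentence $\forall\bar x\,\neg\delta_{\NN}(\bar x)$) and verify via irrevocable testability plus isomorphism-closure that it belongs to $\forall_1(\K)$. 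The logical skeleton is the same in both arguments --- finite testability reduces membership to finite substructures, irrevocability pushes finite substructures of members of $\K$ back into $\K$, and finiteness plus relationality of $\LL$ make finite isomorphism types quantifier-free definable --- but your version dispenses with the enumeration of $\K[n]$ and is in that respect leaner, at the cost of not exhibiting a concrete axiom schema. One cosmetic point: strictly speaking $\neg\varphi_{\NN}$ is the negation of an existential sentence, so you should pass to its prenex equivalent $\forall\bar x\,\neg\delta_{\NN}(\bar x)$ before invoking membership in $\forall_1(\K)$ as the paper defines it; and your final paragraph correctly identifies that relationality is what guarantees the witness tuple in a purported $\MM'\models\varphi_{\NN}$ spans a \emph{finite} substructure, which is exactly where irrevocable testability (quantifying only over finite substructures) gets its grip.
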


\begin{proof}
We begin by giving a first-order axiomatization of $\mathbb{K}$. For each finite $\NN \in \mathbb{K}$, let $\varphi_{\NN}$ be the formula in $|\NN|$ many free variables given by $\bigwedge\limits_{\varphi \in diag(\NN)} \varphi$. This formula expresses the isomorphism type of $\NN$ relative to the fixed enumeration $x_1,\dots, x_n$: if $\NN \simeq_{\LL(\overline{x})} \NN'$ then $\varphi_{\NN}$ is equivalent to $\varphi_{\NN'}$.
Let $\mathbb{K}[n] = \{\NN\in \mathbb{K} \,|\, |\NN| \leq n \}/\simeq_{\LL(\overline{x})}$.
Since the language $\mathcal{L}(\overline{x})$ is finite and only includes relations and constant symbols, for each $n\in \omega$ there are only finitely $\mathcal{L}(\overline{x})$-isomorphism classes in $\mathbb{K}$ of size $\leq n$, so $\mathbb{K}[n]$ is finite. Let $\psi_n$ be the sentence
\[\psi_n = \forall x_1,\dots \forall x_n \left(\bigwedge\limits_{i\neq j} x_i\neq x_j \rightarrow \bigvee_{[\MM]\in \mathbb{K}[n]} \varphi_{\MM}\right).\]
By construction, each $\psi_n$ is a universal sentence, as the disjunction $\left(\bigvee_{[\MM]\in \mathbb{K}[n]} \psi_{\MM}\right)$ is a disjunction of finitely many Boolean combinations of atomic formulas.

Let $T_{\mathbb{K}} = \{\psi_n\}_{n\in\omega}$. I claim that $\mathbb{K} = \operatorname{Mod}(T_{\mathbb{K}})$. To see this, suppose that $M \models T_{\mathbb{K}}$. Because $\mathbb{K}$ is finitely testable it suffices to show that every finite substructure of $\MM$ is a member of $\mathbb{K}$. Let $\NN$ be a substructure of $\MM$ of size $n$. Since $\psi_n$ is universal, $N \models \psi_n$ and so $\NN \models  \psi_{\NN'}$ for some $\NN'$ isomorphic to a member of $\mathbb{K}$. Since $\mathbb{K}$ is closed under isomorphism, $\NN \in \mathbb{K}$. Thus $\MM \in \mathbb{K}$.

Conversely, suppose that $\MM\in \mathbb{K}$. To show that $\MM \models T_{\mathbb{K}}$, it suffices to show that $\MM \models \psi_n$ for each $n$. Let $(m_1,\dots,m_n) \in \MM^n$ be a variable assignment. The set $\NN = \{m_1,\dots,m_n\}$ is a set of size $\leq n$ and is a substructure of $M$. By the irrevocable testability of $\mathbb{K}$, $N \in \mathbb{K}$. Thus, $\NN\models \varphi_{[\NN]}$. Thus $\MM\models \psi_n$.
\end{proof}

Moreover, a similar argument works to show that a FIT class closed under isomorphism over an arbitrary finite language is universally axiomatizable.

\begin{thm} \label{-finitelanguage}
Let $\K$ be a FIT class of structures over a finite language $\LL$ closed under isomorphism. Then $\K$ is universally axiomatizable.
\end{thm}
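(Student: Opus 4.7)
The plan is to adapt the proof of Theorem~\ref{thm:fitness_ax} to accommodate function and constant symbols. The essential observation is that even when $\LL$ contains such symbols, the finiteness of $\LL$ still guarantees that only finitely many $\LL$-isomorphism classes of $n$-element structures exist for each $n$: each function symbol of arity $k$ has at most $n^{n^k}$ possible interpretations on an $n$-element set, each constant has $n$ possible interpretations, and each relation symbol has $2^{n^k}$ interpretations, so the total count remains finite. The key difference from the relational case is that a finite subset of an $\LL$-structure $\MM$ is now a substructure of $\MM$ only when it is closed under all function symbols of $\LL$ and contains the interpretations of all constants; in the relational case this hypothesis was automatic and so absent from the axioms.

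First I would enumerate, for each $n \in \omega$, the finite set $\K[n]$ of $\LL$-isomorphism classes of members of $\K$ of size exactly $n$. For each representative $\NN$ with fixed enumeration $a_1,\dots,a_n$, let $\varphi_{\NN}(x_1,\dots,x_n)$ be the conjunction of all atomic and negated atomic formulas in the diagram of $\NN$, so that $\varphi_{\NN}$ characterizes the $\LL$-isomorphism type of $\NN$. Next I would build a quantifier-free formula $\operatorname{Sub}_n(x_1,\dots,x_n)$ expressing that $\{x_1,\dots,x_n\}$ contains each constant symbol's interpretation and is closed under the application of every function symbol; this is a finite conjunction of finite disjunctions of equations, precisely because $\LL$ has only finitely many function and constant symbols of bounded arity.

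Then I would define the universal axioms
\[\psi_n := \forall x_1 \cdots \forall x_n\left[\left(\bigwedge_{i\neq j} x_i\neq x_j\wedge \operatorname{Sub}_n(\overline{x})\right)\rightarrow \bigvee_{[\NN]\in\K[n]}\varphi_{\NN}(\overline{x})\right]\]
and set $T_{\K} = \{\psi_n\}_{n\in\omega}$. To verify $\K = \operatorname{Mod}(T_{\K})$: if $\MM \in \K$, then any $n$ distinct elements of $\MM$ satisfying $\operatorname{Sub}_n$ form a substructure of $\MM$ of size $n$, which by irrevocable testability lies in $\K$, so its isomorphism type appears in $\K[n]$ and $\MM \models \psi_n$. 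Conversely, if $\MM \models T_{\K}$, then every finite substructure $\NN$ of $\MM$ (necessarily of some size $n$) can be enumerated without repetition, and that enumeration witnesses $\operatorname{Sub}_n$; hence by $\psi_n$ and closure of $\K$ under isomorphism, $\NN \in \K$, and finite testability then gives $\MM \in \K$.

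The only real subtlety---and the main conceptual point separating this from Theorem~\ref{thm:fitness_ax}---is the need to explicitly encode ``the tuple generates a substructure'' inside the antecedent of the axioms rather than applying them to arbitrary $n$-tuples. Once $\operatorname{Sub}_n$ is constructed as a quantifier-free formula, the remainder of the bookkeeping runs in exact parallel with the relational case.
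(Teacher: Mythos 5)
Your proof is correct and follows essentially the same route as the paper: the paper's $\chi_n$ is exactly your $\operatorname{Sub}_n$ (a quantifier-free formula asserting closure under function symbols and containment of constants), placed as an antecedent guard in the universal axioms $\psi_n$, with the two FITness clauses then used in the same two directions. The only cosmetic differences are that you restrict $\K[n]$ to structures of size exactly $n$ and add the distinctness conjunct explicitly, whereas the paper uses size at most $n$; both bookkeeping choices work.
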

\begin{proof}
Same as the above, but by defining the axiom scheme $\psi_n$ as follows. For a function symbol $f$, we denote the arity of $f$ by $ar(f)$.

Let $\chi_n(x_1,\dots,x_n)$ be the formula given by
\[\chi_n = \left( \bigwedge\limits_{f\in \LL}\left[\bigwedge\limits_{I\subset [n]^{ar(f)}}\bigvee\limits_{0<j<n} f(\overline{x}_I) = x_j\right] \wedge \bigwedge\limits_{c\in \LL}\bigvee\limits_{0<i\leq n} x_i = c \right),\]
where $\bigwedge\limits_{f\in \LL}$ and $\bigwedge\limits_{c\in \LL}$ are understood to be $\top$ in case $\LL$ contains no function or constant symbols respectively.

This formula expresses that the set $x_1,\dots, x_n$ is an $\LL$-structure of size $\leq n$, as it expresses that $x_1,\dots, x_n$ is  closed under all function symbols $f\in \LL$ and contains all constants $c\in \LL$. Since $\LL$ is finite, this is a quantifier-free first-order formula.

As above, let $T_{\K}$ be axiomatized by 
\[\psi_n = \forall x_1,\dots,x_n \left(\chi_n \rightarrow \bigvee\limits_{[\MM] \in \K[n]} \varphi_{\MM} \right). \]

A nearly identical argument as before suffices to show that $\K$ is axiomatized by $T_{\K}$.
Let $T_{\mathbb{K}} = \{\psi_n\}_{n\in\omega}$. I claim that $\mathbb{K} = \operatorname{Mod}(T_{\mathbb{K}})$. To see this, suppose that $M \models T_{\mathbb{K}}$. Because $\mathbb{K}$ is finitely testable it suffices to show that every finite substructure of $\MM$ is a member of $\mathbb{K}$. Let $\NN$ be a substructure of $\MM$ of size $n$. Since $\psi_n$ is universal, $\NN \models \psi_n$. Since $\NN$ is an $\LL$-structure of size at most $n$, $\NN \models \chi_n$, so $\NN \models \psi_{\NN'}$ for some $\NN'$ isomorphic to a member of $\mathbb{K}$. Since $\mathbb{K}$ is closed under isomorphism, $\NN \in \mathbb{K}$. Thus $\MM \in \mathbb{K}$.

Conversely, suppose that $\MM\in \mathbb{K}$. To show that $\MM \models T_{\mathbb{K}}$, it suffices to show that $\MM \models \psi_n$ for each $n$. Let $(m_1,\dots,m_n) \in \MM^n$ be a variable assignment. The set $\NN = \{m_1,\dots,m_n\}$ is a set of size $\leq n$. If $\NN$ is an $\LL$-structure, then $\NN$ is a substructure of $\MM\in \K$ so $\NN \models \varphi_{[\NN]}$ and hence \[\MM \models \chi_n(m_1,\dots,m_n) \rightarrow \bigvee\limits_{[\MM] \in \K[n]} \varphi_{\MM}(m_1,\dots,m_n).\] If $\NN$ is not a substructure, then the variable assignment satisfies $\MM \models \neg \chi_n(m_1,\dots,m_n)$ and so \[\MM \models \chi_n(m_1,\dots,m_n) \rightarrow \bigvee\limits_{[\MM] \in \K[n]} \varphi_{\MM}.\]
Thus $\MM \models \psi_n$ for all $n$, so $\MM \models T_{\K}$.
\end{proof}

\begin{propo}
Suppose that $T$ is a universally axiomatizable class over a finite signature $\mathcal{L}$. 

\begin{enumerate}
    \item If $\LL$ is relational, then $\operatorname{Mod}(T)$ is a FIT class.
    \item There exist finitely axiomatizable $T$ which are not FIT. \qedhere
\end{enumerate}

\end{propo}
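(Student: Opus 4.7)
The plan is to handle the two parts separately. For part (1), the two key ingredients are (a) universal sentences pass to substructures, and (b) in a finite relational signature, the substructure of $\MM$ generated by a finite tuple is finite (indeed, it has underlying set equal to that tuple).

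First I would verify irrevocable testability. Let $\K = \operatorname{Mod}(T)$ for some set $T$ of universal sentences. If $\MM \in \K$ and $\NN \subseteq \MM$ is any substructure, then for each axiom $\forall \bar x\,\varphi(\bar x) \in T$ and each tuple $\bar n$ from $\NN$ one has $\MM \models \varphi(\bar n)$, and since $\varphi$ is quantifier-free this descends to $\NN \models \varphi(\bar n)$. Thus $\NN \in \K$, yielding irrevocable testability for arbitrary substructures, in particular for finite ones.

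For finite testability I would argue by contrapositive: if $\MM \notin \K$, pick an axiom $\forall \bar x\,\varphi(\bar x) \in T$ and a tuple $\bar m \in \MM$ with $\MM \models \neg\varphi(\bar m)$. Let $\NN$ be the substructure of $\MM$ generated by $\bar m$. Here the relational assumption is essential: since $\LL$ has no function symbols, $\NN$ has underlying set exactly $\{m_1,\ldots,m_k\}$ and is finite. Quantifier-freeness of $\varphi$ yields $\NN \models \neg\varphi(\bar m)$, so $\NN \notin \K$. This exhibits a finite substructure of $\MM$ outside $\K$, completing the contrapositive. Nontriviality $\K \neq \Str(\LL)$ is implicit in treating $T$ as a proper universal theory; the proposition should be read modulo that mild assumption, since otherwise FITness fails vacuously.

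For part (2) I would give a simple counterexample using a single existential axiom. Take $\LL = \{P\}$ with $P$ unary and $T = \{\exists x\,P(x)\}$. Then $T$ is finitely axiomatizable, but the structure $\MM = \{a,b\}$ with $P^{\MM} = \{a\}$ models $T$ while its one-element substructure $\NN = \{b\}$ does not, so $\operatorname{Mod}(T)$ fails to be irrevocably testable. I anticipate no real obstacle in either part; the only conceptually interesting feature is that the relational hypothesis in part (1) is precisely what ensures witnesses to failure of a universal axiom generate finite substructures, a guarantee that can fail in richer signatures (e.g. $(\N, x \mapsto x+1)$ has no finite substructures at all, so the same argument breaks down when function symbols are allowed).
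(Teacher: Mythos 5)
Your part (1) is essentially the paper's own argument: irrevocable testability follows because universal sentences are preserved under passing to substructures, and finite testability is proved by contrapositive, using the fact that in a finite relational signature the substructure generated by a tuple witnessing the failure of a universal axiom is finite. Your caveat about nontriviality ($\K \neq \Str(\LL)$) is fair; the paper's proof glosses over it as well.

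Part (2), however, has a genuine problem. The proposition carries the standing hypothesis that $T$ is universally axiomatizable, and item (2) exists precisely to show that the relational assumption in item (1) cannot be dropped: a finitely, \emph{universally} axiomatized theory over a finite signature with function symbols can fail to be FIT. Your example $T = \{\exists x\, P(x)\}$ is not universally axiomatizable (its model class is not closed under substructures), so it does not satisfy the standing hypothesis and only establishes the much weaker, essentially trivial, claim that an existential axiom need not survive passage to substructures. The paper's witness is $\LL = \{f,g,c\}$ with $T = \{(\forall x)\, g(x)\neq x\}$ and $\MM = (\omega, S, \mathrm{id}, 0)$: here $\MM \not\models T$, yet $\MM$ has no finite substructures at all (any substructure contains $c=0$ and is closed under the successor $f$), so the antecedent of finite testability holds vacuously while $\MM \notin \K$, and finite testability fails. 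Notably, you already identified exactly this phenomenon in your closing remark about $(\N, x \mapsto x+1)$ having no finite substructures; that observation, upgraded to a structure violating a single universal axiom, is what part (2) actually requires.
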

\begin{proof}
Suppose that $\LL$ is relational. We need to show that for all $\LL$-structures $\MM$, $\MM \models T$ if and only if $\NN \models T$ for all finite $\NN\subset \MM$.

Suppose that $\MM \models T$. Then since $T$ is universally axiomatizable, $\NN \models T$ for all substructures $\NN\subset \MM$. On the other hand, suppose $\MM \not\models T$. Then there exists a $\forall_1$ sentence $\varphi\in T$ such that $\MM\models \neg \varphi$. Since $\varphi$ is $\forall_1$, $\neg\varphi$ is $\exists_1$, there exists a witness $\overline{m}$ to $\MM \models \neg \varphi$. The finitely generated substructure $\MM_0 = \left\langle m\right\rangle \subset \MM$ also satisfies $\MM_0 \models \neg\varphi$. Since $\LL$ is relational, $\MM_0$ is a finite structure, so $\MM\not\models T$ ensures that there is a finite $\MM_0\subset \MM$ with $\MM_0\not\models T$. Thus the models of $T$ form a FIT class.

On the other hand, let $\LL = \{f(x),g(x),c\}$ and let $T$ be the theory given by the single universal axiom: $(\forall x)\; g(x) \neq x.$ We now exhibit an example of an $\LL$-structure $\MM$ such that $\MM\not\models T$ but $\NN \models T$ for all finite substructures $\NN\subset \MM$. Let $\MM$ have domain $\omega$ and interpret $f(x) = S(x)$ the successor function, $g(x) = x$ the identity function, and $c=0$. Note that $\MM$ has no finite substructures, so vacuously $\NN\models T$ for all finite structures $\NN\subset \MM$. However, $\MM\not\models T$, so $\MM$ is not FIT.
\end{proof}

\subsection{FITness: The Arbitrary Language Case} 

The setting in which Simon and Gr{\"o}en work involves a distinction between \textit{observational} and \textit{theoretical} scientific terms. Let $\LL = \LL_o \cup \LL_t$ be a language partitioned into the \textit{observational} language $\LL_o$ and \textit{theoretical language} $\LL_t$. Let $\Sigma$ be an $\LL$-theory. There is, of course, the class of models of the theory:
\begin{equation*} \text{Mod}(\Sigma) = \{ \MM\,|\, \MM\models \Sigma \} \subset \text{Str}(\LL).
\end{equation*} 
By definition, this class is \textit{elementary}, meaning that it is first-order axiomatizable. However, the class $\text{Mod}(\Sigma)$ is \textit{not} the appropriate class of structures to look at, for if there is a true \textit{o/t} distinction then the scientist only has epistemic access to the \textit{observable} structure. Instead, Sneed \autocite{sneed1979physics} isolates the fundamental relation between scientific $\LL$-theory $\Sigma$ and some $\LL_o$-structure $\NN$ of observations is that of \textit{application}: say that $\Sigma$ \textit{applies to} $\NN$ just in case the $\LL_o$-structure $\NN$ can be expanded\footnote{An \textit{expansion} of an $\LL_o$ structure $\NN$ to an $\LL$-structure $\widetilde{N}$ is an $\LL$-structure where the domain of $\widetilde{\NN}$ is $\NN$ and all of the symbols in $\LL_o$ are interpreted as is $\NN$.} to a full $\LL$-structure $\widetilde{N}$ such that $\widetilde{N}\models \Sigma$. The \textit{pseudoelementary} class of such structures is given by:
\begin{equation*}
\text{Mod}^{*}(\Sigma) = \{ \MM|_{\LL_o} \,|\, \MM\models \Sigma \} \subset \text{Str}(\LL_o).
\end{equation*} 

In the case of pseudoelementary classes $\mathbb{K} = \text{Mod}^{*}(\Sigma)$, we are able to drop the hypothesis that $\LL$ is a finite language to conclude that an $\LL_o$-FIT $\K$ is $\forall_1$-axiomatizable. This is the original result of Simon and Groen \autocite{simon1979fit}.

\begin{propo}
Let $\Sigma$ be an $\LL_o$-\textit{FIT} theory. Then the class $\operatorname{Mod}^{*}(\Sigma)$ is an elementary class and admits a universal axiomatization.\footnote{In \autocite{simon1979fit} Simon claims that this result follows from the {\L{}}o\'s-Tarski theorem. However, the {\L}o\'s-Tarski theorem applies to elementary classes, whereas the class in question---$\operatorname{Mod}^{*}(\Sigma)$---is a pseudoelementary class. Thus a new proof is needed.}
\end{propo}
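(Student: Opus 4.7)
The plan is to show that $\K := \operatorname{Mod}^*(\Sigma)$ is axiomatized by its own universal $\LL_o$-theory $T_0 := \forall_1(\K)$, from which both elementarity and universal axiomatizability follow at once. The inclusion $\K \subseteq \operatorname{Mod}(T_0)$ is automatic from the definition of $T_0$, so the entire content lies in the reverse inclusion $\operatorname{Mod}(T_0) \subseteq \K$.

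For the reverse inclusion, I would fix an arbitrary $\MM \models T_0$ and invoke finite testability: it suffices to show that every finite substructure $\NN \subseteq_{\LL_o} \MM$ belongs to $\K$. Fix such an $\NN$, let $\bar a$ enumerate its elements, and let $\operatorname{diag}(\NN)$ denote its quantifier-free diagram with parameters $\bar a$. For each finite $\Delta \subseteq \operatorname{diag}(\NN)$, let $\phi_\Delta(\bar x)$ be the conjunction of the formulas in $\Delta$ with $\bar a$ replaced by fresh variables $\bar x$.

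The crux is a dichotomy on $\NN$. If some finite $\Delta$ satisfies $\K \models \forall \bar x\, \neg \phi_\Delta(\bar x)$, then this $\forall_1$ sentence lies in $T_0$ but is falsified in $\MM$ by the tuple $\bar a$, contradicting $\MM \models T_0$. Otherwise, for every finite $\Delta \subseteq \operatorname{diag}(\NN)$ there is some $\MM'_\Delta \in \K$ realizing $\exists \bar x\, \phi_\Delta(\bar x)$. Introducing new constants $\bar c$ matching $\bar a$, consider the $\LL \cup \{\bar c\}$-theory
\[ T^* = \Sigma \cup \{\phi_\Delta(\bar c) : \Delta \subseteq_{\text{fin}} \operatorname{diag}(\NN)\}. \]
Any finite fragment of $T^*$ is satisfiable: combine its $\phi_\Delta$'s into a single $\phi_{\Delta'}$, pick a realizer in some $\MM'_{\Delta'} \in \K$, lift it to an $\LL$-expansion satisfying $\Sigma$ by pseudoelementarity, and interpret $\bar c$ as that realizer. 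Compactness then produces a model $\NN^\star \models T^*$, whose $\LL_o$-reduct lies in $\K$ and contains a finite substructure isomorphic to $\NN$ via $\bar a \mapsto \bar c^{\NN^\star}$. Irrevocable testability together with closure of $\K$ under isomorphism yields $\NN \in \K$, as required.

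The main obstacle is precisely the step where the finite-language arguments of Theorems~\ref{thm:fitness_ax} and~\ref{-finitelanguage} become unavailable: with $\LL_o$ potentially infinite, there may be infinitely many quantifier-free isomorphism types of any given size, so no direct enumeration into an axiom scheme in the spirit of $\psi_n$ is at hand. Compactness, in tandem with the pseudoelementarity of $\K$---needed to lift realizers of finite fragments of $\operatorname{diag}(\NN)$ through $\Sigma$ rather than through mere first-order consequences of $\K$ in the reduct---is what stands in for that enumeration. This is also why, as the footnote remarks, the {\L}o\'s--Tarski theorem cannot be applied directly: elementarity of $\K$ must itself be established along the way rather than assumed.
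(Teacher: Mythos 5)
Your proof is correct, but it takes a genuinely different route from the paper's. The paper reduces the whole statement to a cited theorem of Hodges (a pseudoelementary class closed under substructures is axiomatized by $\forall_1$ sentences), so that the only work left is to verify closure under substructures, which it does by chaining finite testability with irrevocability. You instead re-derive the relevant special case of that theorem from scratch: the dichotomy on finite fragments of $\operatorname{diag}(\NN)$, followed by compactness applied to $\Sigma \cup \{\phi_\Delta(\bar c)\}$, is exactly the standard diagram argument behind the Hodges result, localized to finite substructures so that the two FIT clauses can be used verbatim rather than first being upgraded to full substructure-closure. What the paper's version buys is brevity and modularity; what yours buys is self-containedness and a clear view of precisely where pseudoelementarity enters (you must run compactness through $\Sigma$ itself, since $\K$ need not be elementary a priori), which is the very point the footnote makes about why {\L}o\'s--Tarski cannot simply be invoked. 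One small detail worth making explicit if you write this up: the tuple $\bar c^{\NN^\star}$ realizes the \emph{entire} diagram of $\NN$ (as the union of its finite fragments), and since that diagram records closure of $\bar a$ under any function and constant symbols of $\LL_o$, the set $\bar c^{\NN^\star}$ really is a finite substructure of $\NN^\star|_{\LL_o}$ isomorphic to $\NN$, so irrevocable testability applies to it.
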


\begin{proof} We recall a theorem of model theory \autocite[Theorem~6.6.7]{hodges1993model}:
\begin{quote} \textit{Let $\LL$ be a first-order language and $\K$ be a pseudo-elementary class of $\LL$-structures. Suppose that $\K$ is closed under taking substructures. Then $\K$ is axiomatized by a set of $\forall_1$ $\LL$-sentences. }
\end{quote}
Since $\operatorname{Mod}^{*}(\Sigma)$ is pseudo-elementary, it suffices to show that $\operatorname{Mod}^{*}(\Sigma)$ is closed under substructures. Let $\MM \in \operatorname{Mod}^{*}(\Sigma)$ and let $\NN\subset_{\LL_o} \MM$ be a substructure. To show that $\NN\in \operatorname{Mod}^{*}(\Sigma)$, the \textit{finite testability} implied by $\LL_o$-\textit{FIT}-ness tells us that we need only check that for every finite substructure $\NN_k \subset_{\LL_o} \NN$ satisfies $\NN_k \in \operatorname{Mod}^{*}(\Sigma)$. Since every such $\NN_k$ is an $\LL_o$-substructure of $\MM$, the \textit{irrevocability} of $\LL_o$-\textit{FIT}-ness ensures that $\NN_k \in \operatorname{Mod}^{*}(\Sigma)$.
\end{proof}

That is, \textit{FIT}-ness implies that the pseudo-elementary class of $\LL_o$-structures expandable to models of $\Sigma$ is not only \textit{elementary}, but is in fact axiomatizable by \textit{universal} axioms. 

A partial converse can be given for the case of relational observational languages $\LL_o$:

\begin{propo}
Suppose $\operatorname{Mod}^{*}(\Sigma)$ is a universally axiomatized class of $\LL_{o}$-structures, axiomatized by $T^{*}_{\Sigma}$, such that
\begin{enumerate}[label=\roman*]
\item $\Sigma$ has nontrivial observational consequences, i.e., 
\begin{equation*} \operatorname{Mod}^{*}(\Sigma) \neq \text{Str}(\LL_o), 
\end{equation*}
 and
\item $\LL_{o}$ is a relational language.
\end{enumerate} 
Then $\Sigma$ is $\LL_o$-\textit{FIT}.
\end{propo}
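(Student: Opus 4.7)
The plan is to verify the two clauses of FIT-ness in Definition \ref{def:fitness} directly, leveraging the universal axiomatization $T^*_\Sigma$ together with the relational hypothesis on $\LL_o$. Nontriviality of $\operatorname{Mod}^*(\Sigma) \neq \Str(\LL_o)$ is immediate from assumption (i), so it remains to check finite and irrevocable testability.

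For irrevocable testability, the key observation is that universally axiomatized classes are closed under taking substructures, not merely finite ones. Concretely, if $\MM \in \operatorname{Mod}^*(\Sigma)$ and $\NN \subseteq_{\LL_o} \MM$ is finite, then for every $\forall_1$ sentence $\varphi \in T^*_\Sigma$ we have $\MM \models \varphi$, and since $\varphi$ is universal its truth is inherited by $\NN$. Hence $\NN \models T^*_\Sigma$, so $\NN \in \operatorname{Mod}^*(\Sigma)$. This step does not require any hypothesis on $\LL_o$.

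For finite testability, I would argue by contrapositive: suppose $\MM \notin \operatorname{Mod}^*(\Sigma)$; I will produce a finite $\NN \subseteq_{\LL_o} \MM$ with $\NN \notin \operatorname{Mod}^*(\Sigma)$. Since $T^*_\Sigma$ axiomatizes $\operatorname{Mod}^*(\Sigma)$, there exists $\varphi \in T^*_\Sigma$ with $\MM \not\models \varphi$, and since $\varphi$ is $\forall_1$, its negation is existential, witnessed by some finite tuple $\overline{m}\in \MM^k$. The substructure $\NN = \langle \overline{m} \rangle \subseteq_{\LL_o} \MM$ then satisfies $\neg\varphi$, and hence is not in $\operatorname{Mod}^*(\Sigma)$. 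The crucial point where the relational hypothesis intervenes is in guaranteeing that $\langle \overline{m}\rangle$ is finite: in a relational language the substructure generated by a finite set is just the set itself equipped with the restricted relations. Without relationality one might need to close under function symbols and obtain an infinite substructure, so the argument would break down (as the last proposition of the previous subsection already illustrates).

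The main (minor) obstacle is thus bookkeeping about what ``finitely generated'' amounts to in a relational signature; everything else is a direct translation between the semantic closure properties of universally axiomatized classes and the two clauses of FIT-ness. Once both directions of finite testability and irrevocable testability are verified, together with nontriviality, the class $\operatorname{Mod}^*(\Sigma)$ satisfies Definition \ref{def:fitness}, and we conclude that $\Sigma$ is $\LL_o$-FIT.
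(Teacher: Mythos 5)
Your proposal is correct, and the irrevocable-testability half is handled exactly as in the paper: universal sentences pass to arbitrary substructures, so in particular to finite ones. Where you genuinely diverge is in the finite-testability half. The paper argues in the positive direction: assuming every finite $\NN \subseteq_{\LL_o} \MM$ lies in $\operatorname{Mod}^*(\Sigma)$, it embeds $\MM$ into an ultraproduct of its finitely generated substructures (which are exactly the finite ones, by relationality), invokes \L{}o\'s's theorem to conclude the ultraproduct models $T^*_\Sigma$, and then uses preservation of universal sentences under substructure to pull $T^*_\Sigma$ back to $\MM$. You instead argue the contrapositive: if $\MM \notin \operatorname{Mod}^*(\Sigma)$ then some $\forall_1$ sentence $\varphi \in T^*_\Sigma$ fails in $\MM$, the existential $\neg\varphi$ has a finite witness tuple $\overline{m}$, and since quantifier-free formulas are absolute between $\MM$ and $\langle\overline{m}\rangle$, the finite substructure $\langle\overline{m}\rangle$ already falsifies $\varphi$ and so falls outside $\operatorname{Mod}^*(\Sigma)$. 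Both arguments lean on relationality at the same pressure point --- identifying finitely generated substructures with finite ones --- but yours is the more elementary route, avoiding ultraproducts entirely; it is essentially the argument the paper itself uses in the earlier proposition showing that a universally axiomatizable relational theory yields a FIT class. The paper's ultraproduct formulation buys a cleaner view of finite testability as a local compactness principle, but nothing in this particular statement requires that extra machinery.
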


\begin{proof}
To show that $\Sigma$ is $\LL_o$-\textit{FIT} we must show both \textit{finite testability} and \textit{irrevocable testability}.

\textbf{Finite testability:} Since $\operatorname{Mod}^{*}(\Sigma) \neq \text{Str}(\LL_o)$, it follows that that $\operatorname{Mod}(\Sigma) \neq \text{Str}(\LL)$, for otherwise \textit{all} $\LL_o$ structures would be reducts of models of $\Sigma$.

We now need to show that if, for all $\MM_k \subset_{\LL_o} \MM$ finite, $\MM_k \in \operatorname{Mod}^{*}(\Sigma)$, then $\MM \in \operatorname{Mod}^{*}(\Sigma)$. It is a known result \autocite[Exercise 2.5.20]{marker2006model} that any structure $\MM$ is $\LL_o$-embeddable\footnote{Not necessarily elementarily.} into an ultraproduct of its finitely-generated substructures. Since $\LL_o$ is relational, the finitely-generated substructures are precisely the \textit{finite} substructures. Thus, there is an $\LL_o$-embedding $\iota: \MM \hookrightarrow \prod\limits_{\mathcal{U}} \MM_k$ for $\mathcal{U}$ any nonprincipal ultrafilter over the collection of finite substructures of $\MM$. Now, as $\MM_k \models T^*_{\Sigma}$ for all finite $\MM_k \subset_{\LL_o} \MM$, $\prod\limits_{\mathcal{U}} \MM_k \models T^{*}_{\Sigma}$. Since $T^{*}_{\Sigma}$ is universally axiomatizable and $\MM \subset_{\LL_o} \prod\limits_{\mathcal{U}} \MM_k$, $\MM \models T^{*}_{\Sigma}$. But this means that $\MM \in \operatorname{Mod}^*(\Sigma)$. So $\Sigma$ is finitely testable.

\textbf{Irrevocable testability:} We need to show that if $\MM \in \operatorname{Mod}^{*}(\Sigma)$ then for all $\MM_k \subset_{\LL_o} \MM$ finite, $\MM_k \in \operatorname{Mod}^{*}(\Sigma)$. Since $T^*_{\Sigma}$ is universally axiomatizable, \textit{any} $\LL_o$-substructure of $\MM$ is a model of $T^*_{\Sigma}$. In particular, each finite $\MM_k \subset_{\LL_o} \MM$ is a model of $T^*_{\Sigma}$ and is therefore a member of $\operatorname{Mod}^{*}(\Sigma)$, as desired.

Hence $\Sigma$ is $\LL_o$-\textit{FIT}.
\end{proof}

The two properties defining \textit{FIT}-ness warrant scrutiny in virtue of their strong implications. We may view the finitely testable hypothesis as a \textit{local compactness} principle: in the stated form it says that if every finite $\MM_k \subset \MM$ is \textit{consistently expandable to a model of $\Sigma$}, so too is $\MM$. The irrevocability hypothesis expresses the closure of the class $\operatorname{Mod}^{*}(\Sigma)$ under (finite) substructures, which together with finite testability implies closure under substructures.

Moreover, when working with a relational language the semantic criterion of \textit{FIT}-ness is equivalent to the universal axiomatizability of the \textit{observable} consequences of the theory. Thus, on the Simon-Groen view, given a universally axiomatizable $\LL_o$ theory $T$ \textit{any} $\LL_o$-conservative extension of $T$ to an $\LL$-theory $T'$ is scientific. For instance, consider adding unary predicate symbols $P_1,\dots, P_n$ and defining 
\[T_m = T\cup \{\forall x \left( \bigvee\limits_{1\leq i\leq n} P_i(x) \right) \}.\]
the $\LL_o$-consequences of $T_m$ are the $\LL_o$-consequences of $T$ and so $T_m$ is \textit{FIT} and therefore scientific. By construction, however, the truth of the axioms of $T_m$ are independent from any collection of observational data.

\subsection{FITness and Finite Generation}

The definition of FITness required that membership in a class $\K$ be witnessed by all finite substructures themselves being members of $\K$. However, except in the relational case, a substructure being finitely generated does not imply that that substructure is finite. In this section we consider the analogous notion of FITness obtained by replacing ``finite'' with ``finitely generated'' everywhere in the definition of FITness.

\begin{defn} \label{def:fg-fitness}
Let $\LL$ be a language and $\mathbb{K}$ a class of $\LL$-structures. $\mathbb{K}$ is said to be \textit{fg-FIT} provided that
\begin{enumerate}[label=\roman*]
\item $\mathbb{K}$ is \textit{fg testable}, i.e. $\mathbb{K}$ is nontrivial:
\begin{equation*}
\mathbb{K} \neq \text{Str}(\LL)
\end{equation*}
and that for every $\MM\in \text{Str}(\LL)$,
\begin{equation*}
(\forall \NN\in\text{Str}(\LL) [(\NN \text{ is finitely-generated}  \wedge \NN \subseteq_{\LL} \MM) \rightarrow \NN \in \mathbb{K}]) \rightarrow \MM \in \mathbb{K},
\end{equation*}
\item and $\mathbb{K}$ is \textit{fg-irrevocably testable}, i.e. for every $\MM\in \text{Str}(\LL)$ 
\begin{equation*}
\MM \in \mathbb{K} \rightarrow (\forall \NN\in\text{Str}(\LL) [(\NN\text{ is finitely-generated} \wedge \NN \subseteq_{\LL} \MM) \rightarrow \NN \in \mathbb{K}]). \qedhere
\end{equation*}
\end{enumerate} 
\end{defn}

The FITness and fg-FITness of a class are generally inequivalent.

\begin{propo}
Let $\mathcal{L} = \{+,-,\times,0,1\}$ be the language of rings, and let $\mathbb{K}$ be the class of all rings of positive characteristic, i.e. rings such that there exists an $n \in \omega$ such that $\underbrace{1 + \cdots + 1}_{n\rm\ times} = 0$. Then $\mathbb{K}$ is fg-FIT but not first-order axiomatizable. In particular, $\K$ is not FIT.
\end{propo}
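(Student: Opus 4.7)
The plan splits into three parts: (a) show $\K$ is not first-order axiomatizable, (b) use Theorem~\ref{-finitelanguage} to conclude that $\K$ is not FIT, and (c) verify fg-FITness directly.

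For (a), I would run the standard compactness argument. Suppose for contradiction that $\K = \operatorname{Mod}(T)$ for some first-order $\LL$-theory $T$. Let $\chi_n$ denote the $\LL$-sentence $\underbrace{1+\cdots+1}_{n} \neq 0$, and consider $T' = T \cup \{\chi_n : n \geq 1\}$. Any finite fragment involves finitely many $\chi_n$, and is satisfied by $\mathbb{F}_p$ for any prime $p$ larger than all $n$'s appearing, so by compactness $T'$ has a model $\MM$. But then $\MM \models T$ forces $\MM \in \K$, i.e., $\MM$ has positive characteristic, contradicting $\MM \models \chi_n$ for all $n$. Hence $\K$ is not first-order axiomatizable. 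In particular $\K$ is not universally axiomatizable, so by Theorem~\ref{-finitelanguage} (applied to the finite language $\LL$, noting that $\K$ is obviously closed under isomorphism), $\K$ is not FIT.

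For (c), I must verify both fg-irrevocable testability and fg-testability, together with nontriviality. Nontriviality is immediate: $\mathbb{Z}$ is an $\LL$-structure not in $\K$. For fg-irrevocability, suppose $\MM \in \K$ has characteristic $n > 0$, and let $\NN \subseteq_\LL \MM$ be finitely generated. Since $1$ is a constant of $\LL$, $1_\MM \in \NN$, and the equation $\underbrace{1+\cdots+1}_{n} = 0$ in $\MM$ restricts to $\NN$; hence $\NN$ has characteristic dividing $n$, so positive characteristic, so $\NN \in \K$.

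The substantive step is fg-testability. Suppose every finitely generated $\NN \subseteq_\LL \MM$ lies in $\K$; I want $\MM \in \K$. First, each such $\NN$ is a ring, and since the ring axioms are $\forall_1$ and every tuple from $\MM$ lies in some finitely generated $\NN$, $\MM$ itself satisfies the ring axioms. Next, consider the substructure $\NN_0 \subseteq_\LL \MM$ generated by $\varnothing$; because $0$ and $1$ are constants in $\LL$, $\NN_0$ exists and is finitely generated (in fact, generated by $\varnothing$). By hypothesis $\NN_0 \in \K$, so there is some $n \geq 1$ with $\underbrace{1+\cdots+1}_{n} = 0$ in $\NN_0$; this equation then holds in $\MM$, so $\MM$ has positive characteristic and $\MM \in \K$. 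The main observation driving the whole argument is this: in the language of rings, positive characteristic is detected by the single finitely generated substructure $\NN_0$, even though no bound on the witness $n$ is uniform across $\K$---which is precisely why compactness defeats first-order axiomatizability while leaving fg-FITness intact.
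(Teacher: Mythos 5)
Your proposal is correct and follows essentially the same route as the paper: positive characteristic is witnessed by a quantifier-free equation (so it passes down to substructures and is detected by the prime subring $\langle 1\rangle$), while the finite fields $\mathbb{F}_p$ of unbounded characteristic defeat first-order axiomatizability. The only cosmetic difference is that you use compactness where the paper takes a nonprincipal ultraproduct of the $\mathbb{F}_p$, and you are slightly more careful than the paper in checking that an arbitrary $\LL$-structure whose finitely generated substructures all lie in $\K$ is itself a ring.
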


\begin{proof}
To show that $\mathbb{K}$ is fg-FIT, it suffices to show that for a ring $R$, $R\in \mathbb{K}$ just in case every finitely-generated subring of $R$ is in $\mathbb{K}$. Suppose that $R\in \mathbb{K}$. This is witnessed by the quantifier-free formula $\underbrace{1 + \cdots + 1}_{n\rm\ times} = 0$, so any subring $R'\subset R$ is also a member of $\K$. Conversely, if $R\notin \K$ then $\left\langle 1\right\rangle$ is infinite and therefore $\left\langle 1\right\rangle \notin \K$.

To show that $\K$ is not first-order axiomatizable, it suffices to show that $\K$ is not closed under ultraproducts by \autocite[Theorem 4.1.12]{changkeisler}. Note that each finite field $F_p$ is a member of $\K$. Let $\mathcal{U}$ be a nonprincipal ultrafilter on the set of primes. Then $F = \prod\limits_{\mathcal{U}} F_p$ is a field of characteristic zero, thus $F\notin \K$. Hence $\K$ is not first-order axiomatizable. Therefore, by \autoref{thm:fitness_ax}, $\K$ is not FIT.
\end{proof}

Moreover, unlike the FIT case, every universally axiomatizable theory is fg-FIT.

\begin{propo}
Suppose that $T$ is a universally axiomatizable class over an arbitrary signature $\LL$. Then $T$ is fg-FIT.
\end{propo}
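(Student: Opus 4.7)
The plan is to verify the two clauses of \autoref{def:fg-fitness} for $\operatorname{Mod}(T)$ essentially from the classical fact that $\forall_1$-sentences are preserved under substructures. The only subtlety beyond that standard fact lies in handling the nontriviality clause, which I will address by assuming $T$ has at least one axiom whose negation is realized in some $\LL$-structure; if every universal sentence in $T$ is logically valid then $\operatorname{Mod}(T) = \operatorname{Str}(\LL)$ and the claim is vacuous, so we lose nothing by this assumption.

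For fg-irrevocable testability, suppose $\MM \in \operatorname{Mod}(T)$ and $\NN \subseteq_{\LL} \MM$ is finitely generated. Since $T$ is $\forall_1$-axiomatizable, it is preserved under arbitrary substructures; in particular, $\NN \models T$, so $\NN \in \operatorname{Mod}(T)$. Note that finite generation is not even needed here: every substructure (finitely generated or not) is in $\operatorname{Mod}(T)$.

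For fg-testability, I argue by contraposition. Suppose $\MM \notin \operatorname{Mod}(T)$. Then there is some universal axiom $\varphi = (\forall x_1,\dots,x_n)\,\psi(x_1,\dots,x_n) \in T$, with $\psi$ quantifier-free, such that $\MM \models \neg \varphi$. Pick a witnessing tuple $\overline{m} \in \MM^n$ with $\MM \models \neg\psi(\overline{m})$, and let $\NN = \langle \overline{m} \rangle \subseteq_{\LL} \MM$. Then $\NN$ is finitely generated by construction. Because $\neg\psi$ is quantifier-free and its atomic subformulas evaluate identically in $\NN$ and $\MM$ on elements of $\NN$, we have $\NN \models \neg \psi(\overline{m})$ and hence $\NN \models \neg \varphi$, so $\NN \notin \operatorname{Mod}(T)$. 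Thus not every finitely generated substructure of $\MM$ lies in $\operatorname{Mod}(T)$, which is exactly what fg-testability requires.

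The main obstacle is really conceptual rather than technical: one must remember that finitely generated substructures need not be finite, which is precisely why the proof above goes through for fg-FITness even when the corresponding argument for plain FITness fails (as illustrated by the preceding characteristic-$p$ example, where $\langle 1\rangle$ is finitely generated but infinite in characteristic zero). Once that distinction is registered, the argument is a direct application of the substructure-preservation property of $\forall_1$ sentences, with no need to invoke compactness, ultraproducts, or any finiteness hypothesis on $\LL$.
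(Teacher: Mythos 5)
Your proof is correct and follows essentially the same route as the paper's: irrevocable testability from preservation of $\forall_1$ sentences under substructures, and fg-testability by contraposition, extracting a witnessing tuple for the failed universal axiom and passing to the substructure it generates. Your explicit handling of the nontriviality clause is a small improvement in care over the paper's version, which silently omits it, but the argument is otherwise identical.
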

\begin{proof}
We need to show that for all $\LL$-structures $\MM$, $\MM \models T$ if and only if $\NN \models T$ for all finitely-generated $\NN\subset \MM$.

Suppose that $\MM \models T$. Then since $T$ is universally axiomatizable, $\NN \models T$ for all substructures $\NN\subset \MM$. On the other hand, suppose $\MM \not\models T$. Then there exists an $\forall_1$ sentence $\varphi\in T$ such that $\MM\models \neg \varphi$. Since $\varphi$ is $\forall_1$, $\neg\varphi$ is $\exists_1$, there exists a witness $\overline{m}\in \MM^k$ to $\MM \models \neg \varphi$. The finitely generated substructure $\MM_0 = \left\langle m\right\rangle \subset \MM$ also satisfies $\MM_0 \models \neg\varphi$. Thus $\MM\not\models T$ ensures that there is a finite $\MM_0\subset \MM$ with $\MM_0\not\models T$. Thus the models of $T$ form an fg-FIT class.
\end{proof}

\subsection{Remarks on Signatures in FITness}

In the above discussions regarding FITness, fg-FITness, and universal axiomatizability, it was shown that in the case of a finite relational language, these notions are equivalent without a background assumption on the class $\K$ beyond closure under $\LL$-isomorphism. However, these notions began to decouple in the case of languages with constant symbols and function symbols. This behavior is not so surprising; when converting a function symbol $f$ to a relation symbol by defining $R_f$ by identifying $\forall x\forall y R_f(x,y) \leftrightarrow f(x)=y$, to eliminate the function symbol $f$ from the language completely requires one to include an  $\forall_2$ axiom of the form
\[ \forall x \exists y R_f(x,y), \]
which in general will \textit{not} be equivalent to a $\forall_1$ sentence. Thus, implicit in the inclusion of function symbols in the language is a $\forall_2$ axiom in a purely relational language.

\subsection{UNCAF Theories}

Motivated by theories of revealed preference in economics, Chambers et al. \autocite{chambers2014axiomatic} argue that the empirical content of a theory is captured not by general universal sentences but instead by a special kind of universal sentence they term UNCAF.

\begin{defn}\autocite[Definition 4]{chambers2014axiomatic}
An UNCAF sentence in a language $\LL$ is a \textul{u}niversal \textul{n}egation of a \textul{c}onjunction of \textul{a}tomic \textul{f}ormulas; that is, a sentence of the form
\[(\forall x_1,\dots x_n) \neg\left(\bigwedge\limits_{1\leq i < m} \varphi_i(x_1,\dots,x_n) \right) \]
where each $\varphi_i$ is atomic.
\end{defn}

Perhaps surprisingly, they argue that sentences of the form $(\forall x) P(x)$ is \textit{not} falsifiable by virtue of not being UNCAF, while $(\forall x)\neg P(x)$ is.

To argue this point, they write that
\begin{quote}
    substructures are unsatisfactory as mathematical models for observed data since they correspond to a situation in which the scientist observes the presence or absence of every possible relation among the elements in his data and, therefore, cannot accommodate partial observability.
\end{quote}

While I agree with this general point, the conclusion that only UNCAF sentences have empirical content is too strong. For example, let $S(x)$ be the predicate ``$x$ is a swan'' and $W(x)$ the predicate ``$x$ is white.'' The sentence ``all swans are white,'' when formalized, is equivalent to
\[ (\forall x)\neg (S(x) \wedge \neg W(x)),\]
which is not UNCAF owing to the presence of $\neg W(x)$ as a nested subformula. To conclude that this sentence has no falsificational content seems to run counter to the usual conception of falsification: after all, \textit{if} I were able to produce an example $c$ such that $S(c) \wedge \neg W(c)$, I would immediately be able to infer that $W\notin \mathbb{K}$. However, on their model it is as if, when I go to the local bird sanctuary I am told that I may \textit{only} record instances of white swans. One should not expect to be able to produce a counterexample to ``all swans are white'' under such constraints! 

The way that Chambers et al. circumvent this worry is to note that for each predicate $P$ one may add a new relation symbol $P^{\neg}$ together with the axiom
\[\forall x (\neg P(x) \iff P^{\neg}(x)).\]
While this approach does formally work, it is somewhat awkward that this axiom itself is \textit{not} UNCAF, as we see by reducing it to
\[\forall x\neg((\neg P(x) \wedge \neg P^{\neg}(x)) \vee (P(x) \wedge \neg P^{\neg}(x))).\]

Their understanding of falsification \textit{qua} UNCAF-expressibility entangles two separate considerations: first, whether there is \textit{in principle} any falsificational strategy on the basis of some configuration being witnessed by a finite set of data, and \textit{second} whether the model of knowledge acquisition allows one to actually carry out the falsificational strategy. Their account corresponds to a model of knowledge acquisition in which at each stage one gains (at most) one \textbf{positive} (relative to $\mathcal{L}$) observation at a time, in a semidecidable fashion.

As an example, suppose that a researcher is observing an agent Ashley and wishes to falsify whether or not her preference relation is complete:
\[\forall x,y \left((x \leq y) \vee (y \leq x)) \right).\]
To do so, the observer waits each day $d$ to see whether the agent exhibits some preference relation between a can of Guayak{\'i} Enlighten Mint ready-to-drink Yerba Mate and a can of Guayak{\'i} Revel Berry that are sitting side-by-side in the office fridge, with no other items in potential consideration.

This experiment, as construed, is doomed to never falsify the experiment. After all, if there is some day $d$ where Ashley surveys the fridge and takes a can of Enlighten Mint but not Revel Berry (resp. Revel Berry but not Enlighten Mint), then $EM \geq RB$ (resp. $RB \geq EM$) and therefore no refutation of the completeness axiom is possible in the context. Likewise, if the day that Ashley takes a can out of the fridge never comes, that \textit{also} does nothing to falsify the completeness axiom.

So, what went wrong? Implicit in their semantics for the experiment is a suppressed existentially-defined quantifier. Let $R_A(x,y,d)$ be the relation that says ``on day $d$, agent $A$ expressed a weak preference $x$ over $y$.'' Then the formula $x\geq y$ in Chambers' terminology would not be $\forall_1$ but instead properly $\forall_2$:
\[ \forall x,y \left((\exists t)R_A(x,y,t) \vee (\exists t)R_A(y,x,t)\right).\]
Therefore, the purported example of an unfalsifiable $\forall_1$ sentence is better and more directly modeled as an unfalsifiable $\forall_2$ sentence fully compatible with the standard account of falsification as a universal over an in-principle decidable primitive. What their point indicates is that the standard revealed preference relations in economics are \textit{not} in-principle decidable, but instead are $\exists_1$-definable relative to the empirical relation $R_A(x,y,d)$ via
\[ x \geq y := (\exists t)R_A(x,y,d). \]

If we take as epistemically primitive a $\exists_1$-definable relation $R(x,y)$ defined by an $\LL$-formula $\exists c \varphi(x,y,c)$ with $\varphi$ quantifier-free, then their result is clear. A sentence of form $(\forall_1) R(x,y)$ is an $\exists_2$ sentence, while an UNCAF sentence in the language $\LL_R = \{R(x,y)\}$,
\[(\forall x_1,\dots,x_n) \neg\left(\bigwedge\limits_{i,j \in I} R(x_i,x_i) \right), \]
with $I\subset 2^{[n]}$ finite is equivalent to the $\LL$-sentence:
\[(\forall x_1,\dots,x_n) \left(\bigvee\limits_{i,j \in I} \forall c\; (\neg \varphi(x_i,y_i,c)) \right) \]
which is equivalent to a $\forall_1$ sentence in $\LL$.

\subsection{Strength of Theories and their Falsifiability}

Contrary to mere falsifiability, FITness, fg-FITness, and UNCAF-axiomatizable are typically not closed upwards under strength.

\begin{propo}
Let $\K$ be a universally axiomatizable FIT or UNCAF class such that $\K$ has both finite and infinite models. Then the class $\K'\subset \K$ of infinite members of $\K$ is not FIT, fg-FIT, or UNCAF.
\end{propo}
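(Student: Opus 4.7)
The plan is to derive a contradiction with each of the three properties by exhibiting a finite substructure of some member of $\K'$: such a substructure is forced to lie in $\K'$ by the property in question, yet is automatically excluded from $\K'$ on cardinality grounds. The common setup is that $\K$, being universally axiomatizable, is closed under $\LL$-substructures, so every substructure of an element of $\K' \subseteq \K$ remains in $\K$; only the further condition of being infinite can exclude it from $\K'$.

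Concretely, I would fix an infinite $\MM \in \K'$ and a finite substructure $\NN \leq_{\LL} \MM$. In the relational setting, any singleton in $\MM$ yields such an $\NN$; more generally, when $\LL$ has no function symbols, the substructure generated by the constants of $\LL$ is finite. Closure of $\K$ under substructures gives $\NN \in \K$, and finiteness gives $\NN \notin \K'$. The three sub-arguments are then uniform and require only the definitions of the three properties.

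For the UNCAF case, every UNCAF sentence is $\forall_1$, so UNCAF-axiomatizable classes are closed under substructures; if $\K'$ were UNCAF-axiomatizable, then $\NN \leq_{\LL} \MM \in \K'$ would force $\NN \in \K'$, a contradiction. For the FIT case, irrevocable testability applied to $\MM \in \K'$ directly places $\NN$, as a finite substructure of $\MM$, into $\K'$. For the fg-FIT case, a finite structure is in particular finitely generated, so fg-irrevocable testability of $\K'$ similarly yields $\NN \in \K'$. In all three cases the contradiction is with $\NN \notin \K'$.

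The main obstacle is the language-dependent question of exhibiting a finite substructure of some infinite $\MM \in \K'$. In relational languages this is automatic, but in a language with function symbols it can happen that every substructure of a given infinite $\MM$ is infinite; in that scenario one must use the hypothesis that $\K$ has finite models and leverage closure of $\K$ under substructures to locate an alternative $\MM$ whose constant-generated substructure is finite. This is a mild combinatorial step rather than a genuine logical difficulty, and once such an $\NN$ is produced the three contradictions go through without change.
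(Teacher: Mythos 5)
Your overall strategy coincides with the paper's: both arguments reduce the claim to exhibiting a finite substructure $\NN$ of some infinite $\MM\in\K'$ and then use closure under (finitely generated) substructures to force the contradiction $\NN\in\K'$. Your execution of the three sub-cases is correct and in one respect improves on the paper: the paper concludes only that $\K'$ is not universally axiomatizable and infers ``neither FIT nor UNCAF,'' which does not by itself dispose of fg-FITness (the paper's own example of rings of positive characteristic shows that fg-FIT classes need not be universally axiomatizable), whereas your direct appeal to irrevocable and fg-irrevocable testability handles all three properties uniformly.

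However, the step you defer as ``a mild combinatorial step''---producing an infinite member of $\K$ that has a finite substructure---is a genuine gap, and one the paper shares, since it likewise asserts without argument that $\K'$ fails to be closed under substructures. Closure of $\K$ under substructures together with the existence of both finite and infinite members does \emph{not} suffice. Take $\LL=\{f\}$ with $f$ unary and $T=\{\sigma_n\}_{n\geq 1}$, where
\[\sigma_n = \forall x\,\forall y_1\cdots\forall y_{n+1}\Bigl(f^n(x)\neq x \;\vee\; \bigvee_{1\leq i<j\leq n+1} y_i=y_j\Bigr),\]
i.e., ``if some point has period dividing $n$ then there are at most $n$ elements.'' This theory is universal, has the finite model $(\{a\},\,f(a)=a)$ and the infinite model $(\Z,\,x\mapsto x+1)$, yet every infinite model satisfies $\forall x\, f^n(x)\neq x$ for all $n$ and so has no finite substructure whatsoever, since a finite $f$-closed set must contain a periodic point. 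Your proposed repair---locating a model whose constant-generated substructure is finite---does not address this: there are no constants here, and in general constant-generated substructures are infinite once function symbols are present. The missing lemma therefore has to use the standing hypothesis that $\K$ is FIT or UNCAF, not merely that it is universally axiomatizable; for instance, in the FIT case one can suppose for contradiction that no infinite member of $\K$ has a finite substructure, note that then every finite substructure of $\MM_0\sqcup\NN_0$ (with $\MM_0\in\K$ infinite, $\NN_0\in\K$ finite, and $\LL$ without constants) is a substructure of $\NN_0$ and hence lies in $\K$ by irrevocability, and apply finite testability to place the infinite structure $\MM_0\sqcup\NN_0$, which has the finite substructure $\NN_0$, into $\K$. (The class above is neither FIT nor UNCAF, so it does not refute the proposition---only the unqualified reduction.) In the purely relational setting, where any singleton is a substructure, your argument is complete as written.
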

\begin{proof}
Let $T$ be a universal axiomatization of $\K$. Then $\K'$ is axiomatized by $T\cup \{\psi_n\}_{n\in\omega}$ where $\psi_n$ is the sentence $(\exists x_1,\dots x_n)\bigwedge\limits_{1\leq i\neq j\leq n} x_i\neq x_j$. 

Since $\K$ is closed under substructures and admits finite models, $\K'$ necessarily fails to be closed under substructures. Thus $\K'$ is not universally axiomatizable, and in particular is neither FIT nor UNCAF.
\end{proof}

\subsection{Falsification and NIP}

In the preceding sections, we have considered a sequence of refinements to the basic notion of falsifiability. We have seen, under mild conditions on the signature $\LL$ and classes $\K$ the web of implications
\[\begin{tikzcd}
	{\mathbb{K} \text{ FIT}} & {\mathbb{K}\;\; \forall_1-\text{Axiomatizable}} & {\boxed{\mathbb{K} \text{ Falsifiable}}} \\
	&& {\mathbb{K} \text{ has nontrivial UNCAF theory}}
	\arrow[from=1-2, to=1-3]
	\arrow[from=2-3, to=1-3]
	\arrow[from=1-1, to=1-2]
\end{tikzcd}\]

However, there are a great deal of hypotheses which do not readily fall into this framework at first glance.

For example, we are often interested in testing whether or not a (basic) relation $R(x_1,\dots,x_n)\in \LL$ is equivalent to some other (basic) relation $S(x_1,\dots,x_n)\in \LL$. This is easy to handle directly in our account of falsification; after all,
\[(\forall x_1,\dots,x_n) (R(x_1,\dots,x_n) \leftrightarrow S(x_1,\dots,x_n)) \]
is a $\forall_1$ sentence in $\LL$ by assumption.

What if, instead, we are probing a more complicated question, such as whether or not $R(x_1,x_2)$ is a line in $\RR^n$? In the language of rings augmented by an additional relation symbol
\[\LL = \{+,\times,0,1,<, R(x,y)\}\]
this is most easily expressed by the $\exists_2$ formula 
\[T_L = (\exists a,b)(\forall x,y) (R(x,y) \leftrightarrow L(x,y;a,b,c) ) \]
where $L(x,y;a,b)$ is the sentence $ay+bx+c=0$. Despite being $\exists_2$, this sentence has a great deal of falsificational content owing to the structure of the parametric family $L(x,y;a,b,c)$. From Euclidean geometry that between any two distinct points there exists a unique line. Letting $R^*(a,b,c,d)$ be the sentence \[R^*(a,b,c,d) = ([(a\neq c) \vee (b\neq d)] \wedge R(a,b) \wedge R(c,d)).\] Then
\begin{align*}
(\forall x,y)(\forall a,b,c,d) \left(R^*(a,b,c,d) \rightarrow \left(   L\left(x,y,\frac{d-b}{c-a},b-a\frac{d-b}{c-a}\right)\rightarrow R(x,y) \right)\right)
\end{align*}
which is a nontrivial $\forall_1$ sentence. Thus, while $T_L$ is $\exists_2$ it has nontrivial $\forall_1$ consequences.

These properties of lines is an example of the \textit{VC finiteness} of the class. For the remainder of the section, we assume that $\K$ is an \textit{elementary} class, axiomatized by some first-order set of sentences $T$.

\begin{defn}\autocite[pg. 7-8]{simon2015guide}
Let $\varphi(\overline{x};\overline{y})$ be a first-order formula in disjoint sets of free variables $\overline{x}, \overline{y}$. With respect to this partition we say that $\varphi$ is a \textit{partitioned} formula.

Let $\MM\in \K$. We say that $\varphi(\overline{x};\overline{y})$ $\MM$-shatters a set $X\subset \MM^{|\overline{x}|}$ just in case there is a set $Y\subset \MM^{|\overline{y}|}$ such that for every subset $X'\subset X$ there exists $y'\in Y$ such that
\[\MM \models \varphi(x;y') \iff x\in X'\]
for all $x\in X$.

A partitioned formula is \textit{NIP} provided for every $\MM \in \K$, no infinite set is $\MM$-shattered by $\varphi$. 

The formula $\varphi$ has Vapnik-Chervonenkis (VC) dimension, $VC(\varphi) \leq n$ just in case for all $\MM\in \K$, no set of size $n$ is $\MM$-shattered. If $\varphi$ has finite VC dimension then $\varphi$ is said to be \textit{VC finite}.

A theory $T$ is NIP just in case every formula $\varphi$ is NIP in the class $\K = \operatorname{Mod}(T)$. \end{defn}

For elementary classes $\K$, a formula being NIP is related to its VC finiteness:

\begin{propo}
Let $\K$ be an elementary class and $\varphi(\overline{x};\overline{y})$ a partitioned first-order formula. If $\varphi$ is NIP, then $\varphi$ has finite VC dimension.
\end{propo}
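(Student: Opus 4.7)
The plan is to prove the contrapositive via a standard compactness argument: if $\varphi$ has infinite VC dimension in the elementary class $\K = \operatorname{Mod}(T)$, I will produce a single $\MM^* \in \K$ in which $\varphi$ shatters an infinite set, contradicting NIP.

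Assume that for every $n \in \omega$ there is some $\MM_n \in \K$ containing an $n$-element $\varphi$-shattered set. The key step is to enlarge the language $\LL$ by adjoining fresh constant tuples $c_i$ of length $|\overline{x}|$ for each $i \in \omega$, together with fresh constant tuples $d_X$ of length $|\overline{y}|$ for each subset $X \subseteq \omega$, and then to form the expanded theory
\[T^* = T \cup \{\varphi(c_i; d_X) : X \subseteq \omega,\; i \in X\} \cup \{\neg\varphi(c_i; d_X) : X \subseteq \omega,\; i \notin X\}.\]

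Next I would verify that $T^*$ is finitely satisfiable. Any finite $T_0 \subseteq T^*$ mentions only finitely many constant tuples, say $c_{i_1},\dots,c_{i_n}$ and $d_{X_1},\dots,d_{X_m}$. Working in $\MM_n$, pick an $n$-element $\varphi$-shattered set $\{a_1,\dots,a_n\}$, and for each $j \leq m$ use the shattering property to choose a witness $b_j$ with $\MM_n \models \varphi(a_k; b_j) \leftrightarrow i_k \in X_j$; interpreting $c_{i_k}$ by $a_k$ and $d_{X_j}$ by $b_j$ makes $\MM_n$ a model of $T_0$. By compactness $T^*$ has a model $\MM^*$, whose $\LL$-reduct is in $\K$ since $T$ axiomatizes $\K$. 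The interpretations $c_i^{\MM^*}$ are automatically pairwise distinct: for $i \neq j$, taking $X = \{i\}$ forces $\varphi(c_i^{\MM^*}; d_{\{i\}}^{\MM^*})$ and $\neg\varphi(c_j^{\MM^*}; d_{\{i\}}^{\MM^*})$. Thus $\{c_i^{\MM^*} : i \in \omega\}$ is an infinite set, and the family $\{d_X^{\MM^*} : X \subseteq \omega\}$ witnesses that this set is $\varphi$-shattered in $\MM^*$, contradicting the NIP hypothesis.

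The argument has no real obstacle beyond packaging; it is the usual compactness translation of ``arbitrarily large finite $\Rightarrow$ infinite'' from model theory. The only bookkeeping to manage is that $c_i$ and $d_X$ are tuple-valued rather than single constants, and that each finite fragment of $T^*$ only constrains finitely many shattering patterns and so can be realized in an $\MM_n$ of sufficiently large $n$.
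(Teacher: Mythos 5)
Your proof is correct and takes the same route the paper does: the paper's proof simply states that the result is an elementary consequence of the compactness theorem (citing Remark 2.3 of Simon's NIP guide), and your argument is precisely that compactness argument written out in full, with the bookkeeping about tuple-valued constants and pairwise distinctness handled properly.
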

\begin{proof}
This is an elementary consequence of the compactness theorem of First-Order Logic \autocite[Remark 2.3]{simon2015guide}.
\end{proof}

A first-order formula $\varphi(\overline{x};\overline{y})$ having VC dimension $\leq n$ is first-order expressible by a sentence $VC_n(\varphi)$; moreover, if $\varphi$ is quantifier-free then the proposition $VC_n(\varphi)$ is a $\forall_1$ sentence.

\begin{propo}
A formula $\varphi(x;y)$ having VC dimension $\leq n$ is first-order expressible in any language containing $\varphi$ by a sentence $VC_n(\varphi)$.  Moreover, if $\varphi$ is $\exists_m / \forall_m$, then $VC_n(\varphi)$ is at most $\forall_{m+1}$.

In particular, if $\varphi$ is quantifier-free then $VC_n(\varphi)$ is a $\forall$ sentence.
\end{propo}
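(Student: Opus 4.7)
The plan is to write down directly a sentence expressing that no $(n+1)$-element set is shattered and then collapse the resulting $\forall\exists\forall$-prefix via a routine prenex manipulation. By the definition of shattering, a tuple $(\overline{x}_0,\ldots,\overline{x}_n)$ is not shattered exactly when there is some $S \subseteq \{0,\ldots,n\}$ such that no $\overline{y}$ realizes the pattern $\varphi(\overline{x}_i;\overline{y}) \leftrightarrow (i \in S)$ on all $i$. Quantifying universally over the $\overline{x}_i$'s and unpacking the biconditionals, ``$VC(\varphi) \leq n$'' is directly expressible as
\[
VC_n(\varphi) \;:=\; \forall \overline{x}_0 \cdots \overline{x}_n \bigvee_{S \subseteq \{0,\ldots,n\}} \forall \overline{y}\; \bigvee_{i=0}^{n} \chi_{S,i}(\overline{x}_i;\overline{y}),
\]
where $\chi_{S,i}$ abbreviates $\neg\varphi(\overline{x}_i;\overline{y})$ when $i \in S$ and $\varphi(\overline{x}_i;\overline{y})$ otherwise. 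This establishes expressibility; the remaining work is to control the quantifier complexity.

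To collapse the $\forall\exists\forall$-shape down to $\forall$, I would invoke the elementary equivalence
\[
\bigvee_{j=1}^{N} \forall \overline{z}\, \psi_j(\overline{z}) \;\equiv\; \forall \overline{z}_1 \cdots \overline{z}_N\; \bigvee_{j=1}^{N} \psi_j(\overline{z}_j),
\]
valid once the bound variables have been renamed disjointly: right-to-left is immediate, and left-to-right is contrapositive, picking an $\overline{z}_j$ refuting $\psi_j$ for each $j$ when no disjunct holds universally. Applying this with $N = 2^{n+1}$ fresh copies $\overline{y}_S$ of $\overline{y}$ indexed by the subsets $S$, the inner $\forall \overline{y}$ absorbs into the outer universal block, yielding
\[
VC_n(\varphi) \;\equiv\; \forall \overline{x}_0 \cdots \overline{x}_n\, \forall (\overline{y}_S)_{S}\; \bigvee_{S}\; \bigvee_{i=0}^n \chi_{S,i}(\overline{x}_i;\overline{y}_S).
\]
When $\varphi$ is quantifier-free the matrix is quantifier-free, so $VC_n(\varphi)$ is a $\forall_1$ sentence --- the principal claim.

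For the general bound, the matrix is a Boolean combination of instances of $\varphi$ and $\neg\varphi$, hence of $\exists_m$- and $\forall_m$-formulas whenever $\varphi$ is itself $\exists_m$ or $\forall_m$. After renaming the bound variables in each instance to be pairwise disjoint, both $\wedge$ and $\vee$ of such formulas prenex into $\forall_{m+1}$-form (via the standard trick of merging adjacent like quantifier blocks), and the outer $\forall$-quantifiers then absorb the leading $\forall$-block of the matrix, yielding that $VC_n(\varphi)$ is a $\forall_{m+1}$ sentence. I expect the main bookkeeping burden to lie in this last prenexing step --- verifying that disjunctions of $\forall_{m+1}$-formulas collapse back into $\forall_{m+1}$-form after variable renaming --- but there is no real conceptual obstacle; the substantive content of the proof is the single prenex swap $\bigvee_S \forall \overline{y} \equiv \forall (\overline{y}_S)_S \bigvee_S$.
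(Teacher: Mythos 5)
Your proof is correct and arrives at essentially the same sentence as the paper's: a purely universal block over the points $\overline{x}_i$ and the $2^{n+1}$ candidate witnesses $\overline{y}_S$ indexed by subsets, followed by a Boolean combination of instances of $\varphi$. The only differences are presentational --- the paper writes the prenex form directly as the negation of $\exists (\overline{y}_S)_S\, \mathrm{Shatter}_\varphi$ rather than deriving it via your $\bigvee_S \forall \overline{y}$ swap (whose two directions you describe correctly, though with the labels ``left-to-right'' and ``right-to-left'' interchanged), and it adds a harmless distinctness guard on the $\overline{x}_i$; meanwhile you supply more detail on the $\forall_{m+1}$ bound than the paper does.
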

\begin{proof}
First, the proposition ``$x_1,\dots,x_n$ is shattered by $\{y_J \}_{J\subseteq [n]}$ in $\varphi$'' is a Boolean combination of instances in $\varphi$:
\[\operatorname{Shatter}_{\varphi}\left((x_i)_{1\leq i\leq n},(y_J )_{J\subseteq [n]}  \right) \iff \bigwedge\limits_{J\subseteq [n]} \bigwedge\limits_{1\leq i \leq n} \square_{i,J} \varphi(x_i,y_J) \]
where $\square_{i,J} \varphi$ is $\neg \varphi$ if $i\notin J$ and $\varphi$ if $i\in J$.

The proposition $VC_n(\varphi)$ is expressed by the following first-order sentence:
\[(\forall(x_i)_{1\leq i\leq n})\forall(y_J)_{J\subseteq [n]}\left( \left(\bigwedge\limits_{1\leq i\neq j\leq n} x_i\neq x_j\right)\rightarrow \neg \operatorname{Shatter}_{\varphi}\left((x_i)_{1\leq i\leq n},(y_J )_{J\subseteq [n]}  \right)  \right),\]
as desired.
\end{proof}

\begin{propo}
Let $T$ be a complete NIP theory in a language $\LL$ containing an $m$-ary relation symbol $R(\overline{x},\overline{y})$ for some $n>1$. Then $T$ implies a nontrivial universal sentence.
\end{propo}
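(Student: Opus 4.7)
The plan is to use the previously proved propositions essentially as a black box and then exhibit a ``free'' structure in which $R$ shatters a large set, showing the resulting $\forall_1$ sentence is nontrivial.

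First, since $R(\overline{x},\overline{y})$ is an atomic $\LL$-formula, it is in particular a partitioned formula with both parameter block $\overline{y}$ and object block $\overline{x}$ nonempty (using $m>1$). Because $T$ is NIP, $R$ is an NIP formula in the class $\K = \operatorname{Mod}(T)$, so by the preceding proposition $R$ has some finite VC dimension, say $n$, uniformly across $\K$. By the proposition preceding this one, since $R$ is quantifier-free, $VC_n(R)$ is a $\forall_1$ sentence, and by the definition of VC dimension it is satisfied by every $\MM \in \K$. Hence $T \vdash VC_n(R)$.

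It remains to verify that $VC_n(R)$ is not logically valid, i.e., that it fails in some $\LL$-structure $\mathcal{N}$ (not necessarily a model of $T$). Write $k = |\overline{x}|$ and $\ell = |\overline{y}|$, both $\geq 1$. The idea is to take the universe of $\mathcal{N}$ to be large enough to choose $n+1$ distinct $k$-tuples $\overline{a}_1,\dots,\overline{a}_{n+1}$ and, for each $J \subseteq \{1,\dots,n+1\}$, a distinct $\ell$-tuple $\overline{b}_J$, and then \emph{define} $R^{\mathcal{N}}(\overline{a},\overline{b})$ to hold exactly when $\overline{b} = \overline{b}_J$ for some $J$ and $\overline{a} = \overline{a}_i$ for some $i \in J$; the remaining symbols of $\LL$ can be interpreted arbitrarily (e.g.\ all relations empty, all functions constant). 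By construction, $R$ shatters $\{\overline{a}_1,\dots,\overline{a}_{n+1}\}$ in $\mathcal{N}$, so $\mathcal{N} \models \neg VC_n(R)$. Therefore $VC_n(R)$ is a nontrivial universal sentence implied by $T$.

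The main obstacle is really just the construction of $\mathcal{N}$: one must be careful that the tuples chosen to witness shattering are all distinct and that the presence of other function or relation symbols in $\LL$ does not interfere. Both are harmless, since on the one hand we can take the universe of $\mathcal{N}$ as large as needed so that $n+1$ distinct $k$-tuples and $2^{n+1}$ distinct $\ell$-tuples exist (which uses $k, \ell \geq 1$, guaranteed by $m > 1$), and on the other hand we are entirely free to interpret the other symbols of $\LL$ however we wish, since we only need to falsify the universal sentence $VC_n(R)$ whose content refers only to $R$. Completeness of $T$ is used only implicitly, to license speaking of the VC dimension of $R$ as a single finite invariant of $T$.
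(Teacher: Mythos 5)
Your proof is correct and follows essentially the same route as the paper: derive $T \models VC_n(R)$ from the preceding propositions (NIP implies finite VC dimension; $VC_n$ of a quantifier-free formula is $\forall_1$), then exhibit a structure realizing the full shattering pattern for $R$ to show $VC_n(R)$ is not a validity. Your explicit construction of $\mathcal{N}$ is just an unpacked version of the paper's bipartite graph $G_n$ on $[n]\cup 2^{[n]}$ with $R(i,X)\iff i\in X$, so the two arguments coincide in substance.
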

\begin{proof}
Since $T$ is complete NIP, for some $T$ entails the $\forall_1$ sentence $VC_n(R(\overline{x};\overline{y}))$ for some $n\in\omega$. Since $R$ non-unary, $VC_n(R(\overline{x};\overline{y}))$ is not a first-order validity, since the bipartite graph $G_n$ on a disjoint set of vertices $[n]\cup 2^{[n]}$ given by $R(i,X) \iff i\in X$ satisfies
\[G_n\models \neg VC_n(R(\overline{x};\overline{y})).\qedhere \]
\end{proof}

In fact, since $VC_n(R(x;y)) \rightarrow VC_m(R(x;y))$ for all $m>n$, for a VC finite relation we get a nested chain of $\forall_1$ sentences. As we will see in our account of the dynamic case of falsification, this simple observation has very strong consequences in terms of understanding small-sample falsificational problems.

To explain the restriction about the language, we note that there are NIP unary theories entailing no nontrivial $\forall_1$ sentence. Recall \autocite[Definition 4.2.17]{marker2006model}\footnote{In the next section we will work with an alternative, equivalent definition of stability better-suited for our purposes.} that a theory $T$ is \textit{$\kappa$-stable} for a cardinal $\kappa$ if for every model $\MM\models T$, $n\in\omega$, and $A\subseteq \MM$ of size $\kappa$, the space of $n$-types with parameters in $A$ has size $\kappa$
\[ |S_n(A)| = \kappa.\]
A theory is stable provided it is $\kappa$-stable for some infinite $\kappa$. It is well known that stability implies NIP \autocite[Theorem 4.7]{shelah1990classification}.

\begin{propo}
There exists a stable theory $T$ in a unary language which entails no nontrivial universal sentence.
\end{propo}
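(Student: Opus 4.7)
The plan is to take $\LL=\{P\}$ with a single unary predicate, and let $T$ be the theory axiomatized by the schema asserting that both $P$ and $\neg P$ contain at least $n$ distinct elements for every $n\in\omega$; equivalently, $T$ is the theory of an infinite set partitioned into two infinite parts. I claim that $T$ is stable and has trivial universal part.

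For stability, I would first establish completeness via $\aleph_0$-categoricity: any two countable models of $T$ each consist of two disjoint countably infinite sets (one on which $P$ holds, one on which it fails), so they are isomorphic, and since $T$ has no finite models, Vaught's test gives completeness. For $\kappa$-stability at every infinite $\kappa$, I would compute types directly. A complete $1$-type over a parameter set $A$ either specifies $x = a$ for some $a \in A$, or asserts $x \neq a$ for all $a\in A$ together with one of the two truth values of $P(x)$. Hence $|S_1(A)| = |A|$ for $|A|$ infinite, and a straightforward induction on arity (using that $T$ admits quantifier elimination by a back-and-forth between finite partial isomorphisms, which extend since both parts are infinite) yields $|S_n(A)| = |A|$ for all $n$.

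For the universal part, I would show that $T \models \forall \bar x\,\psi(\bar x)$ with $\psi$ quantifier-free forces $\forall \bar x\,\psi(\bar x)$ to be a logical validity. Given any $\LL$-structure $\NN$ and tuple $\bar b \in \NN^n$ with $\NN \models \neg\psi(\bar b)$, I would record the equality pattern on $\bar b$ and the truth value of $P(b_i)$ for each $i$. Since $P^\MM$ and $(\neg P)^\MM$ are both infinite in any $\MM \models T$, I may pick one representative per equality class in the appropriate part to obtain a tuple $\bar a \in \MM^n$ with the same atomic diagram. Quantifier-freeness of $\psi$ then gives $\MM \models \neg\psi(\bar a)$, contradicting $T \models \forall \bar x\,\psi(\bar x)$. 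Hence $\neg\psi$ holds in no $\LL$-structure, so $\forall \bar x\,\psi(\bar x)$ is a validity.

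The construction is essentially routine; the only point requiring care is the last step, where one must observe that every consistent atomic configuration on finitely many variables---specified purely by equalities and $P$-values---can be realized in any model of $T$. This collapse of the universal theory hinges on $\LL$ being unary, since by the preceding proposition any complete NIP (hence any complete stable) theory in a language with a non-unary relation $R$ would already entail the nontrivial $\forall_1$ sentence $VC_n(R)$ for some $n$.
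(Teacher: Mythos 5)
Your proof is correct and takes essentially the same route as the paper: the same theory of an infinite set partitioned into two infinite $P$-classes, completeness via $\aleph_0$-categoricity and Vaught's test, and stability by direct type-counting. The only difference is that you spell out why the universal theory is trivial (every consistent atomic configuration of equalities and $P$-values is realizable in any model of $T$), a step the paper dismisses with ``clearly''; your version is the more complete one.
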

\begin{proof}
Let $T$ be the theory in the language $\LL = \{P(x)\}$ axiomatized by 
\[\varphi_n = (\exists x_1,\dots, x_n)\left( \bigwedge\limits_{i\neq j < n} x_i\neq x_j \wedge \bigwedge\limits_{i<n} P(x_i)\right)\]
and 
\[\psi_n = (\exists x_1,\dots, x_n)\left( \bigwedge\limits_{i\neq j < n} x_i\neq x_j \wedge \bigwedge\limits_{i<n} \neg P(x_i)\right).\]
This theory is clearly $\aleph_0$-categorical: any countable model $\MM$ can be partitioned by
\[\MM = P(\MM) \cup \neg P(\MM) \]
with each definable set $P(\MM),\neg P(\MM)$ countably infinite. If $\MM,\NN \models T$, then any pair of bijections
\[f_P:P(\MM) \to P(\NN)\] 
and 
\[f_{\neg P}: \neg P(\MM) \to \neg P(\NN)\]
induce an $\LL$-isomorphism
\[f_P\cup f_{\neg P}: \MM \to \NN.\]
Moreover, $T$ has no finite models, so by Vaught's test \autocite[Theorem 2.2.6]{marker2006model} $T$ is complete. Clearly, $\forall_1(T)$ contains only first-order validities.

This theory is $\omega$-stable. Let $A$ be a set of size $\leq \aleph_0$. The types over $A$ are determined by specifying which coordinates $x_i$ are equal to an element of $A$ and, for those $x_i \notin A$, whether or not $P(x_i)$ or $\neg P(x_i)$. Thus, there are at most $(|A|+2)^n \leq \aleph_0$ types over $A$.
\end{proof}

Therefore, again under mild conditions on the language
\[\K \text{ NIP} \rightarrow \K \text{ Falsifiable}. \]

We observe that VC finiteness is not equivalent to universal axiomatizability.

\begin{propo}
There exists an NIP $T$ such that $T$ is not universally axiomatizable. There exists a universally axiomatizable $T$ such that $T$ is not NIP.
\end{propo}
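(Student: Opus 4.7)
The plan is to handle the two existence claims separately with well-chosen standard examples, invoking the Łoś–Tarski characterization of universal axiomatizability (a theory is $\forall_1$-axiomatizable iff its model class is closed under substructures) for one half and the VC-shattering witness already constructed in the preceding proposition for the other.

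For the NIP-but-not-universally-axiomatizable direction, I propose taking $T$ to be the theory $T_{\mathrm{DLO}}$ of dense linear orders without endpoints in $\LL = \{<\}$. To verify NIP, I would use that $T_{\mathrm{DLO}}$ has quantifier elimination in $\{<\}$, which reduces the task to checking that the atomic partitioned formula $x<y$ is NIP: a short case analysis shows it shatters no two-element set, so $VC(x<y) = 1$, and NIP is preserved under Boolean combinations. To rule out $\forall_1$-axiomatizability, I would exhibit $(\Z,<) \subseteq (\Q,<)\models T_{\mathrm{DLO}}$ with $(\Z,<)\not\models T_{\mathrm{DLO}}$, since density fails between consecutive integers; thus $\operatorname{Mod}(T_{\mathrm{DLO}})$ is not substructure-closed, and so by Łoś–Tarski is not universally axiomatizable.

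For the universally-axiomatizable-but-not-NIP direction, I would take $\LL = \{R(x,y)\}$ with a single binary relation symbol and let $T$ be the empty theory. This is trivially $\forall_1$-axiomatized, with $\operatorname{Mod}(T) = \operatorname{Str}(\LL)$. To show $T$ is not NIP, I recycle the bipartite graph $G_n$ on vertex set $[n] \cup 2^{[n]}$ with $R(i,X) \iff i\in X$ from the previous proposition: by construction, $G_n \in \operatorname{Mod}(T)$ witnesses that the partitioned formula $R(x;y)$ shatters a set of size $n$, and since this holds for every $n$, $R(x;y)$ has infinite VC dimension across $\operatorname{Mod}(T)$, so $T$ is not NIP. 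Neither half is genuinely delicate, but if pressed I would identify the NIP verification for $T_{\mathrm{DLO}}$ as the point requiring the most external input — quantifier elimination together with closure of NIP under Boolean combinations — while the rest amounts to bookkeeping.
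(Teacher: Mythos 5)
Your proposal is correct, and it diverges from the paper in an instructive way on the second half. For the NIP-but-not-$\forall_1$ direction you and the paper both use $DLO$; the paper witnesses the failure of universal axiomatizability by noting that $\Q\models DLO$ has finite substructures (the language being relational) none of which model $DLO$, whereas you use $(\Z,<)\subseteq(\Q,<)$ --- both are fine, and you additionally spell out the NIP verification (quantifier elimination plus closure of NIP under Boolean combinations of atomics) that the paper leaves implicit. For the $\forall_1$-but-not-NIP direction the paper takes $T$ to be the theory of acyclic directed graphs, axiomatized by the no-$n$-cycle sentences, and observes that the bipartite digraph $G_n$ on $[n]\cup 2^{[n]}$ is acyclic (all edges point from $[n]$ into $2^{[n]}$) and witnesses shattering of an $n$-element set; you instead take the empty theory in $\LL=\{R\}$, so that $\operatorname{Mod}(T)=\operatorname{Str}(\LL)$ trivially contains every $G_n$ and no acyclicity check is needed. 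Your example is logically unimpeachable but degenerate: the empty theory is the maximally \emph{unfalsifiable} class ($\forall_1(\K)$ consists only of validities), so it makes the bare existence claim while saying nothing about the paper's surrounding concern. The paper's choice buys a stronger moral --- a genuinely falsifiable, nontrivially universally axiomatized theory (indeed one used earlier as the paradigm falsifiable theory of rational preference) can still have a formula of infinite VC dimension --- at the small cost of verifying that the shattering witnesses actually satisfy the axioms. If you wanted your version to carry the same weight, you could note that your $G_n$ argument transfers verbatim to any universal theory whose models include all the $G_n$.
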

\begin{proof}
Let $DLO$ be the theory of dense linear orders in the language $\LL = \{<\}$. Then $T$ is not universally axiomatizable as all of its models are infinite and the language is relational. Concretely, we know $\Q \models DLO$ but no finite subset $X\subset \Q$ is a model of $DLO$. Since $\LL$ is relational, $X$ is a substructure.

On the other hand, let $T$ be the (incomplete) theory of acyclic directed graphs in the language $R(x,y)$. $T$ is universally axiomatizable by the collection $\varphi_n$ of sentences defined by
\[\varphi_n = (\forall x_1,\dots,x_n) \neg\left( R(x_n,x_1) \wedge \bigwedge\limits_{1\leq i < n} R(x_i,x_{i+1})\right).\]
This class is not NIP as for each $n$ the bipartite digraph $G_n$ on a disjoint set of vertices $[n]\cup 2^{[n]}$ given by $R(i,X) \iff i\in X$ satisfies
\[G_n\models \neg VC_n(R(x;y))\]
and is a model of $T$.
\end{proof}
\subsection{Useful Examples of NIP Theories and VC finite Classes}

The preceding section describes the relationship between NIP theories, VC finite classes, and falsification, but further argument is required to demonstrate that these phenomena actually appear in the kinds of hypotheses we seek to falsify.

To this end, the following powerful theorem proves the VC finiteness of a very wide class of geometrically-definable hypotheses.

We assume that the reader is familiar with the notion of an analytic function.

\begin{defn} \autocite[]{van1994real}
Let $\mathcal{A}_{[-1,1]}$ be the set of functions $f: [-1,1]\to \RR$ which extends to an analytic function on an open neighborhood $U \supset [-1,1]$. Let $\exp: \RR\to\RR$ be the exponential map $\exp(x) = e^x$.

The restricted analytic exponential real field $\mathcal{R}$ is the structure 
\[\mathcal{R} = (\RR,+,-,0,<, \exp, (f_j)_{f_j\in \mathcal{A}_{[-1,1]}})\]

The theory $\RR_{an,exp}$ is the theory of the structure $\mathcal{R}$.
\end{defn}

In this structure, parametric families of equalities and inequalities between analytic functions on compact rectangular domains are definable. The following result shows that such families have finite VC-dimension:

\begin{thm} \label{thm:ranexp_nip}
$\RR_{an,exp}$ is NIP. Consequently, every first-order definable set in the theory of $\RR_{an,exp}$ has finite VC dimension.
\end{thm}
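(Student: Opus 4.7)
The plan is to reduce the theorem to two well-known results in model theory: the o-minimality of $\RR_{an,exp}$, and the fact that every o-minimal theory is NIP. Once we have NIP, the second sentence of the theorem is immediate from the Proposition above (NIP partitioned formulas have finite VC dimension by compactness), applied uniformly to each definable family.

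First, I would invoke the deep theorem of van den Dries--Macintyre--Marker that the structure $\mathcal{R} = (\RR,+,-,0,<,\exp,(f_j)_{f_j \in \mathcal{A}_{[-1,1]}})$ is o-minimal, meaning that every $\mathcal{R}$-definable subset of $\RR$ is a finite union of points and open intervals (possibly with endpoints $\pm\infty$). This is the substantive input; I would cite it rather than prove it, as its proof rests on quantifier elimination for $\RR_{an}$ and the exponential axiomatization of Wilkie. This step is the main obstacle in the sense that all the genuine mathematical content lies here; once it is in hand, the NIP conclusion is a general model-theoretic consequence.

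Next, I would appeal to the classical result (due to Pillay--Steinhorn in the o-minimal setting, and reflected in Shelah's general framework) that every o-minimal theory is NIP. The reason is essentially combinatorial: if $\varphi(x;\bar{y})$ were a formula with the independence property in an o-minimal theory $T$ (here $x$ a single variable suffices by reducing to coordinates), then for every $n$ one could find a shattered set of size $n$ in some model. But for each parameter $\bar{b}$, $\varphi(x;\bar{b})$ defines a finite union of at most $k$ intervals and points (with $k$ independent of $\bar{b}$, by definable cell decomposition), so a set shattered by $\varphi(x;\bar{y})$ has size at most $O(k)$, contradicting the independence property. The multi-variable case reduces to the one-variable case by induction on $|x|$ using cell decomposition and the fact that NIP is preserved under Boolean combinations and projections in the appropriate sense.

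Finally, with $\RR_{an,exp}$ NIP in hand, the second sentence follows: given any definable set $D$ in a model $\MM \models \RR_{an,exp}$, write $D = \varphi(\MM;\bar{b})$ for some partitioned formula $\varphi(\bar{x};\bar{y})$ and parameter $\bar{b}$. Since $T = \RR_{an,exp}$ is NIP, the partitioned formula $\varphi(\bar{x};\bar{y})$ is NIP, and by the earlier Proposition $\varphi$ has finite VC dimension; hence the family $\{\varphi(\MM;\bar{b}) : \bar{b} \in \MM^{|\bar{y}|}\}$ has finite VC dimension, and in particular so does the single set $D$ viewed as a member of that family. This completes the proof sketch.
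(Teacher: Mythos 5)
Your proposal is correct and follows essentially the same route as the paper: cite the o-minimality of $\RR_{an,exp}$ (the paper attributes this to van den Dries--Miller; the van den Dries--Macintyre--Marker attribution you give is equally standard), combine it with the classical fact that o-minimal theories are NIP, and then deduce finite VC dimension of definable families from the NIP-implies-VC-finiteness proposition. The extra detail you supply on why o-minimality forces NIP (bounding shattered sets via uniform finiteness of the number of intervals) is a correct elaboration of what the paper leaves to the citation.
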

\begin{proof}
That $\RR_{an,exp}$ is $o$-minimal is a classical theorem of van den Dries and Miller \autocite[]{van1994real}, together with the result that every $o$-minimal theory $T$ is NIP, which can be found in Simon's book \autocite[Theorem A.6]{simon2015guide}.
\end{proof}

This theorem is extremely useful because it implies that not only is any parametric family of algebraic equations over a real field VC finite, but in fact any parametric family of semi-analytic inequalities is VC finite. This is of the utmost importance for examples stemming from physics, as it implies that even definable classes where one includes a model of measurement error can be VC finite. 
As an illustration, we consider the example of the family of \textit{fat lines}. By a fat line I mean a set $\tilde{L}\subseteq \RR^2$ such that $(x,y)\in \tilde{L}(x,y;a,b,c,r)$ just in case $(x,y)$ is most distance $r$ from the line $ax+by+c=0$.

\begin{propo}
The class $\tilde{L}(x,y;a,b,c,r)$ of fat lines in $\RR^2$ has finite VC dimension. \qedhere
\end{propo}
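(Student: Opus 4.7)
The plan is to reduce the question to an application of Theorem \ref{thm:ranexp_nip} by exhibiting the family of fat lines as a first-order definable family in $\RR_{an,exp}$. The partitioned formula $\varphi(x,y;a,b,c,r)$ expressing membership in the fat line with parameters $(a,b,c,r)$ is
\[
\varphi(x,y;a,b,c,r) \;\equiv\; (ax+by+c)^2 \leq r^2(a^2+b^2),
\]
which is simply the squared form of the standard point-to-line distance inequality $\frac{|ax+by+c|}{\sqrt{a^2+b^2}} \leq r$. This rewriting avoids both the absolute value and the square root, yielding an inequality that is polynomial in all the variables $(x,y,a,b,c,r)$.

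First I would verify that this formula indeed captures the intended geometric class: the locus in $\RR^2$ of points within distance $r$ of the line $ax+by+c=0$, with the degenerate case $a=b=0$ handled by the inequality itself (it either becomes trivially true or contradictory depending on $c$ and $r$, and in either case imposes no obstacle to VC finiteness). Next I would observe that $\varphi$ is a quantifier-free formula in the language $\{+,\cdot,-,0,1,\leq\}$ of ordered rings, and thus is a quantifier-free, hence first-order, formula in the signature of $\RR_{an,exp}$.

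Finally, I would invoke Theorem \ref{thm:ranexp_nip}: since $\RR_{an,exp}$ is NIP, every first-order definable set has finite VC dimension. In particular, the partitioned formula $\varphi(x,y;a,b,c,r)$ defines a family of subsets of $\RR^2$ parametrized by $\RR^4$, and this family has finite VC dimension in the structure $\RR_{an,exp}$. Since the class of fat lines is precisely the family defined by $\varphi$, it has finite VC dimension, as desired.

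I do not anticipate a main obstacle here: the content of the proposition is essentially the observation that the definition of a fat line is polynomial, together with the heavy machinery already encapsulated in Theorem \ref{thm:ranexp_nip}. The only mild care is in confirming that the squared distance inequality correctly describes the fat line and behaves acceptably in the degenerate parameter regime $a=b=0$, neither of which affects the finiteness of the VC dimension.
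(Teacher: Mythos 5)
Your proposal is correct and follows essentially the same route as the paper: exhibit the fat-line family by a quantifier-free polynomial inequality (the squared point-to-line distance bound) and then invoke Theorem \ref{thm:ranexp_nip}. If anything, your formula $(ax+by+c)^2 \leq r^2(a^2+b^2)$ is the more carefully stated version of the defining inequality, and your remark about the degenerate case $a=b=0$ is a small point of added care the paper omits.
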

\begin{proof}
By \autoref{thm:ranexp_nip}, any set definable in the theory of $\RR_{an,exp}$ is VC finite. Thus it suffices to give an explicit definition of this family. This is easily done: the formula $\varphi(x,y;a,b,r)$ given by
\[ |ax + by + c|^2 < r(a^2+b^2)  \]
is a formula in the language of $\RR_{an,exp}$ and defines the family of fat lines.
\end{proof}

This suggests an explanation for why many physical theories are so readily falsifiable: many of the predictions of physical theories can be cast in terms of determining membership in a real semianalytic set which expresses being some bounded error away from an analytic or algebraic set.

\subsection{Falsification and Model Theoretic Dividing Lines}

It turns out that other classification-theoretic conditions on $\K$ fail to guarantee falsifiability. Recall from classification theory the following definitions:

\begin{defn}
Let $\varphi(\overline{x};\overline{y})$ be a partitioned formula. We say that
\begin{enumerate}
   \item \autocite[Theorem 2.2.3(2)]{shelah1990classification} $\varphi$ is \textit{stable} if there is no set $(c_n)_{n\in\omega}$ such that for every $k<\omega$,
\[ \{\varphi(x,c_0), \varphi(x,c_1), \cdots,\varphi(x,c_{k-1}), \neg \varphi(x,c_{k}), \neg\varphi(x,c_{k+1}), \cdots\}_{n\in\omega} \]
is consistent.
    \item 
    $\varphi$ is NSOP$_1$ if there is no set of tuples
     $\{c_{\sigma}\, | \, \sigma\in 2^{<\omega}\}$ such that
    \begin{enumerate}
        \item (Branch consistency) for every $\tau\in 2^{\omega}$ the set
       \[\{\varphi(x;c_{\tau|_m})\,|\,m\in\omega\} \]
      is consistent, and
     \item (Lateral inconsistency) The set
    \[\{\varphi(x,c_{\sigma\cap\left\langle 1\right\rangle},\varphi(x,c_{\gamma}) \}\]
        is inconsistent for all $\gamma \supseteq \sigma\cap\left\langle 0\right\rangle.$
    \end{enumerate}
\end{enumerate}
A theory $T$ is stable (resp. NSOP$_{1}$) provided every formula in $T$ is stable (resp. NSOP$_1$).
\end{defn}

Je{\v{r}}{\'a}bek \autocite{jevrabek2020recursive}, and, independently, Kruckman and Ramsey \autocite{kruckman2018generic} showed that 

\begin{thm}\label{thm:random_nsop}
Let $\LL$ be a language. Then the model companion of the empty theory $T_{\LL}^{\varnothing}$ exists, is complete, and is
\begin{enumerate}
    \item stable if $\LL$ is unary,
    \item simple if $\LL$ is relational, and
    \item unstable NSOP$_1$ for any non-unary $\LL$.
\end{enumerate}
\end{thm}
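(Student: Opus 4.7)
The plan is to prove the statement in three stages: existence of the model companion $T_{\LL}^{\varnothing,*}$, its completeness, and the three classifications along the dividing lines.

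For existence, when $\LL$ is relational the class of all finite $\LL$-structures is a Fra{\"i}ss{\'e} class---HP is immediate, while JEP and AP hold via \emph{free amalgamation}, imposing no new atomic relations on tuples meeting both structures. Its Fra{\"i}ss{\'e} limit $M_{\LL}$ is $\aleph_0$-categorical and ultra-homogeneous, and by exactly the argument used earlier in the paper to show that $T_{\tau,lim}$ is the model companion of $T_\tau$, its theory $\operatorname{Th}(M_{\LL})$ is the model companion of $T_{\LL}^{\varnothing}$. When $\LL$ contains function symbols, finitely generated substructures need not be finite, so the standard Fra{\"i}ss{\'e} machinery does not apply directly; following Je{\v{r}}{\'a}bek and Kruckman-Ramsey one constructs the generic model as the union of a transfinite chain of finitely generated structures obtained by iteratively freely amalgamating over quantifier-free types, whose theory is model complete and hence the desired model companion. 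Completeness in either case follows from JEP.

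For stability in the unary case, each complete $n$-type over a set $A$ in $M_{\LL}\models T_{\LL}^{\varnothing,*}$ is determined by its quantifier-free type, which for each coordinate specifies either equality to a specific element of $A\cup\{c^{M_{\LL}}\,|\,c\in \LL\}$ or distinctness from all named parameters together with the Boolean combination of unary predicates it realizes. Counting yields $|S_n(A)|\leq (|A|+|\LL|+2^{|\LL|})^n$, giving $\kappa$-stability for every $\kappa\geq |\LL|+\aleph_0$, hence stability.

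For the simple and NSOP$_1$ conclusions, I would introduce the free-amalgamation independence relation: $A$ is independent from $B$ over $C$ just in case the quantifier-free type of $AB$ over $C$ is the free amalgam of the quantifier-free types of $A$ and $B$ over $C$. For relational $\LL$, simplicity follows by verifying the Kim-Pillay axioms---symmetry, monotonicity, transitivity, extension, local character, and the independence theorem over models---each of which reduces to a free amalgamation in the Fra{\"i}ss{\'e} class. For non-unary $\LL$, instability is immediate: given $R\in \LL$ of arity at least two, the formula $R(x;\overline{y})$ can realize any consistent finite pattern, so $M_{\LL}$ contains a sequence $(a_i,\overline{b}_i)$ with $R(a_i,\overline{b}_j)$ holding if and only if $i<j$, witnessing the order property. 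The main obstacle is establishing NSOP$_1$: here I would invoke the Chernikov-Ramsey / Kaplan-Ramsey criterion, verifying that free-amalgamation independence coincides with Kim-independence over models and satisfies the independence theorem over models. This is delicate precisely when $\LL$ has function symbols, since then the generic model is not a Fra{\"i}ss{\'e} limit and one must use the Je{\v{r}}{\'a}bek / Kruckman-Ramsey construction to realize the required generic amalgamations inside $M_{\LL}$ itself.
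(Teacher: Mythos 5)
The first thing to note is that the paper does not prove this statement at all: Theorem~\ref{thm:random_nsop} is imported verbatim as a result of Je{\v{r}}{\'a}bek and of Kruckman--Ramsey, and the only argument the paper supplies in its vicinity is the proof of the \emph{corollary} (that $\forall_1(T_{\LL}^{\varnothing})$ contains only validities). So there is no in-paper proof to compare yours against; what you have written is an outline of the proofs in the cited papers themselves.

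Judged as a self-contained proof, your sketch identifies the right architecture (Fra{\"i}ss{\'e}/free amalgamation for relational $\LL$, completeness from JEP, type-counting for stability, Kim--Pillay for simplicity, the Kaplan--Ramsey criterion for NSOP$_1$), but it defers exactly the two steps that carry the mathematical weight back to the papers being cited, which is circular if the goal is to establish the theorem. Concretely: (i) for $\LL$ with function symbols you assert that a transfinite chain of free amalgams has a model complete theory, but showing that the class of existentially closed models of the empty theory is \emph{elementary} --- i.e., that the model companion exists at all --- is the nontrivial content there, and your sketch gives no axiomatization and no argument that existential closedness is first-order; (ii) for NSOP$_1$ you say you ``would invoke'' the Kaplan--Ramsey criterion by checking that free-amalgamation independence witnesses Kim-independence and satisfies the independence theorem over models, but that verification in the presence of function symbols is precisely the technical core of Kruckman--Ramsey and is not reproduced. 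There are also smaller issues: your stability count for unary $\LL$ handles only unary \emph{predicates}, whereas a unary language may contain unary function symbols (so types must also record the quantifier-free type of the term closure of each coordinate); the bound should read $\kappa\geq 2^{|\LL|}+\aleph_0$ rather than $\kappa\geq|\LL|+\aleph_0$, since all $2^{|\LL|}$ Boolean combinations of the predicates are consistent with the generic theory (stability still follows, but not $\omega$-stability for infinite $\LL$); and for uncountable relational $\LL$ the Fra{\"i}ss{\'e} machinery does not literally apply (countably many isomorphism types fail), so even the relational existence claim needs the existentially-closed-models route rather than a Fra{\"i}ss{\'e} limit.
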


An immediate corollary of this theorem is 

\begin{corollary}
Let $\mathcal{L}$ be a language. Then $\forall_1(T_{\LL}^{\varnothing})$ contains only validities, so $\operatorname{Mod}(T_{\LL}^{\varnothing})$ is not a falsifiable class.
\end{corollary}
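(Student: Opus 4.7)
The plan is to exploit the defining property of the model companion together with the preservation of universal sentences under substructures. Recall that since $T_{\LL}^{\varnothing}$ is the model companion of the empty $\LL$-theory, every model of the empty theory---that is, every $\LL$-structure whatsoever---embeds into some model of $T_{\LL}^{\varnothing}$. This is the only fact from \autoref{thm:random_nsop} that I will need.

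Suppose for contradiction that there is a $\forall_1$ $\LL$-sentence $\varphi$ with $T_{\LL}^{\varnothing} \models \varphi$ that is not a logical validity. Then there is some $\LL$-structure $\MM \in \operatorname{Str}(\LL)$ with $\MM \models \neg\varphi$. Since $\varphi$ is $\forall_1$, the sentence $\neg\varphi$ is logically equivalent to an $\exists_1$ sentence, and $\exists_1$ sentences are preserved under extensions: any $\LL$-structure $\NN$ into which $\MM$ embeds will also satisfy $\neg\varphi$, as witnessed by the images of the existential witnesses in $\MM$.

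By the model companion property, there exists $\NN \models T_{\LL}^{\varnothing}$ together with an embedding $\MM \hookrightarrow \NN$. The preservation argument gives $\NN \models \neg\varphi$, which directly contradicts $T_{\LL}^{\varnothing} \models \varphi$. Hence every $\varphi \in \forall_1(T_{\LL}^{\varnothing})$ is a validity, so $\operatorname{Mod}(\forall_1(T_{\LL}^{\varnothing})) = \operatorname{Str}(\LL)$, and $\operatorname{Mod}(T_{\LL}^{\varnothing})$ is unfalsifiable by definition. There is no real obstacle here: the only substantive content was already packaged into \autoref{thm:random_nsop}, and the remainder is the standard observation that a theory and its model companion share the same universal theory.
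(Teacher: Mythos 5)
Your proof is correct and follows essentially the same route as the paper's: take a structure $\MM \models \neg\varphi$, embed it into a model of $T_{\LL}^{\varnothing}$ using the model companion property, and use preservation of $\exists_1$ sentences under extensions to contradict $T_{\LL}^{\varnothing} \models \varphi$. The only cosmetic difference is that you argue by contradiction where the paper argues the contrapositive directly.
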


\begin{proof}
Suppose that $\varphi$ is a $\forall_1$ sentence in $\LL$ that is not a validity. We wish to show that $T\not\models \varphi$. Without loss of generality we assume that \[\varphi = (\forall x_1,\dots x_n) \psi(x_1,\dots,x_n)\]
With $\psi(x_1,\dots,x_n)$ quantifier-free. Since $\varphi$ is not a first-order validity, $\neg \varphi$ is satisfiable and is a $\exists_1$ formula. Let $\MM$ be an $\LL$ structure such that $\MM\models \neg \varphi$. Since $T^{\varnothing}_{\LL}$ is the theory of existentially closed $\LL$-structures, $\MM$ embeds into a model $\MM' \models T$. By construction, $\MM' \models \neg \varphi$, so $\varphi\notin T^{\varnothing}_{\LL}$.
\end{proof}

To be sure, there exist falsifiable NSOP$_1$ classes. The theory of the random graph, for instance, contains the universal theory of graphs, which in particular includes the non-logical validity
\[\forall x\forall y(R(x,y) \leftrightarrow R(y,x)). \]

In short, NIP classes of structures yield examples of falsifiable structures incomparable to the notions we have thus far discussed. Thus, our picture of the relationship between falsifiable classes of falsifiability now looks like (again assuming mild assumptions on $\K$ and $\LL$):

\[\begin{tikzcd}
	& {\mathbb{K} \text{ Simple}} \\
	{\mathbb{K} \text{ FIT}} & {\mathbb{K} \text{ NSOP}_1} \\
	{\mathbb{K}\;\; \forall_1-\text{Axiomatizable}} & {\boxed{\mathbb{K} \text{ Falsifiable}}} & {\mathbb{K} \text{ NIP}} & {} \\
	& {\mathbb{K} \text{ has nontrivial UNCAF consequence}}
	\arrow[from=3-1, to=3-2]
	\arrow[from=4-2, to=3-2]
	\arrow[from=2-1, to=3-1]
	\arrow[squiggly, no head, from=2-2, to=3-2]
	\arrow[tail reversed, no head, from=3-2, to=3-3]
	\arrow[from=1-2, to=2-2]
\end{tikzcd}\]

\section{Conclusion}
Over the course of this paper we investigated various strengthenings of the basic notion of falsifiability that have been defined and studied. The static models of falsifiability typically concern themselves with questions of how close a theory is to being universally axiomatizable; after all, the more universal sentences a theory implies, in principle the more falsfiable the theory becomes. 

We first studied Simon and Groen's \autocite{simon1979fit} notion of FITness which they claim isolates the ideal scientific theories. They show that for pseudoelementary $\K$, FITness implies universal axiomatizability. Generalizing their definition to arbitrary classes of $\LL$-structures $\K$ closed under isomorphism, I show that for finite languages $\K$ being FIT entails that $\K$ is elementary and, in fact, universally axiomatizable. This result substantially generalizes their result over finite languages. 

I then turned my attention to an argument given by Chambers et al. \autocite{chambers2014axiomatic} that argues that being universally axiomatizable is not sufficient grounds to call a theory falsifiable. Instead, they identify the falsifiable sentences with a class of universal sentences they call UNCAF (a \textul{u}niversal \textul{n}egation of a \textul{c}onjunction of \textul{a}tomic \textul{f}ormulas). In turn, I argued that their argument implicitly assumes that the underlying predicates $P\in \LL$ exhibit $\Sigma_1$ behavior, and thus that their argument reaches too far in its conclusions.

As a final foray into the static case of falsification, I considered how falsification intersects with the dividing lines of classification theory. Under very mild restrictions on the language $\LL$, NIP theories entail a number of nontrivial $\forall_1$ sentences, yielding the falsifiability of NIP theories. Of note, in NIP theories each formula $\varphi$ is equipped with a notion of dimension known as the VC-dimension of a class, which in a sense measures the effective falsifiability of membership in the class of hypotheses it defines. 

Finally, we considered recent work of Kruckman and Ramsey \autocite{kruckman2018generic} and, independently, Je{\v{r}}{\'a}bek \autocite{jevrabek2020recursive}, which yield examples of NSOP$_1$ and simple theories which are \textit{un}falsifiable. While there are many NSOP$_1$ theories which are falsifiable, in a sense NIP is individuated among the dividing lines in model theory as a class of highly-falsifiable theories.

\printbibliography
\end{document}